\documentclass[11pt,a4paper]{amsart}
\input epsf

\usepackage{graphicx, amsmath, amssymb, amscd, amsthm, euscript, amsfonts, xcolor, latexsym, amsbsy}

\usepackage{tikz}
\usetikzlibrary{shapes.geometric, arrows.meta, positioning, calc}

\usepackage{hyperref}
\usepackage{float}
\usepackage{longtable}
\usepackage{subfigure}
\usepackage{graphics, geometry, caption}
\usepackage{makecell}

\newtheorem{thm}{Theorem}[section]
\newtheorem{cor}[thm]{Corollary}
\newtheorem{prop}[thm]{Proposition}
\newtheorem{lem}[thm]{Lemma}
\theoremstyle{definition}

\newcommand{\Rmnum}[1]{\uppercase\expandafter{\romannumeral #1}}

\title[Non-discrete complex hyperbolic $(p_1,p_2,p_3;p_4)$-triangle groups]{Non-discrete complex hyperbolic $(p_1,p_2,p_3;p_4)$-triangle groups}

\author[Wei Liao]{Wei Liao}
\address[Wei Liao]{School of Mathematics, Hunan University, Changsha 410082, P.R. China}
\email{liaowei.math@qq.com}

\author[John R. Parker]{John R. Parker}
\address[John R. Parker]{Department of Mathematical Sciences, Durham University, Durham DH1 3LE, United Kingdom}
\email{j.r.parker@durham.ac.uk}

\keywords{Complex hyperbolic triangle groups, Discreteness. 51M10, 22E40}

\begin{document}

\begin{abstract}
In this paper, we study complex hyperbolic triangle groups of type $(p_1,p_2,p_3;p_4)$ with $3\le p_1\le p_2\le p_3\le p_4\le \infty$. We show that if the element $R_1R_2R_3$ is elliptic, then the group is either non-discrete or it belongs to a short list of known discrete groups.
\end{abstract}

\maketitle

\section{Introduction}
A complex hyperbolic $(p_1,p_2,p_3)$-triangle group is a subgroup of $\mathrm{SU}(2,1)$ generated by complex involutions 
$R_1$, $R_2$, and $R_3$ with mirrors given by the complex geodesics $C_1$, $C_2$, and $C_3$, respectively. 
We also suppose the complex angle between $C_{j-1}$ and $C_{j+1}$ is $\pi/p_j$ (with indices taken modulo $3$), where 
$p_j\in {\mathbb N}\cup \{\infty\}$. When $p_j\in{\mathbb N}$, this implies $R_{j-1}R_{j+1}$ is elliptic of order $p_j$. 
When $p_j=\infty$, the complex geodesics $C_{j-1}$ and $C_{j+1}$ are understood to be asymptotic at the ideal boundary of 
$\mathbf H_{\mathbb C}^2$ and the angle between them is zero. This implies $R_{j-1}R_{j+1}$ is parabolic.

Unlike their real hyperbolic counterpart, if $\min\{p_1,p_2,p_3\}\ge 3$, a complex hyperbolic $(p_1,p_2,p_3)$-triangle group has a 
one-dimensional real parameter space, which can be parametrised by the trace of $R_1R_3R_2R_3$. 
(When $\min\{p_1,p_2,p_3\}=2$ then the group is rigid; we do not consider that case here.) We note that $R_1R_3R_2R_3$ is the 
product of complex involutions in the complex geodesics $C_1$ and $C_{323}=R_3(C_2)$. 
We are interested in the case where $C_1$ and $C_{323}$ intersect or are asymptotic and the angle between them is $\pi/p_4$ 
where $p_4\in{\mathbb N}\cup\{\infty\}$. This implies $R_1R_3R_2R_3$ is elliptic of order $p_4$ or is parabolic. 

A \emph{complex hyperbolic $(p_1,p_2,p_3;p_4)$-triangle group}, denoted by $\Gamma(p_1,p_2,p_3;p_4)$, is the group generated 
by the complex involutions $R_1$, $R_2$, $R_3$ described above. If this group is discree, then using Theorem~1.1 of our previous 
paper \cite{lp}, after changing the generators if necessary, we assume that $3\le p_1\le p_2\le p_3\le p_4\le \infty$.

The study of the discreteness of complex hyperbolic triangle groups in which $R_1R_2R_3$ is elliptic has a history spanning several 
decades, dating back to the case of complex hyperbolic ideal triangle groups studied by Goldman and Parker \cite{gp}. In that paper, 
they conjectured that when $R_1R_2R_3$ is elliptic, the complex hyperbolic ideal triangle group is non-discrete, and this conjecture was 
later proved by Schwartz in \cite{sch}. Subsequently, this result was generalised to complex hyperbolic $(p_1,p_2,\infty)$-triangle 
groups by Pratoussevitch \cite{pra2} and Sun \cite{sun}.

Based on the non-discreteness result in \cite{pra2} and complex hyperbolic versions of J\o rgensen's inequality, Kamiya, Parker, and 
Thompson \cite{kpt2} considered the groups of type $(p_1,p_1,\infty;p_4)$ (or equivalently $(p_1,p_1,p_4;\infty)$) and proved that t
hese groups are non-discrete for $p_1\ge 29$. This bound was later improved by Kamiya in \cite{kam1,kam2} to $p_1\ge 14$. A more 
general result was obtained by Kamiya, Parker, and Thompson~\cite{kpt1}, who used J\o rgensen's inequality to show that groups 
$\Gamma(p_1,p_2,p_3;p_4)$ are not discrete if $p_1\ge 32$. 

Another technical approach is to use the Conway--Jones argument \cite{cj} to prove non-discreteness. For example, Parker \cite{par3} 
applied this method to find all complex hyperbolic $(p_1,p_1,p_1)$-triangle groups for which the element $R_1R_2R_3$ has finite order 
and the group is discrete. Subsequently, Monaghan \cite{mon} and Parker and Sun \cite{ps} investigated, respectively, complex hyperbolic 
$(p_1,p_2,p_3;p_4)$-triangle groups of type $1/p_1=1/p_2+1/p_3$ with $p_1\ge 10$, and the case $p_1=p_2$ with $p_3=p_4$. 
Their results suggest that, under the condition $p_1\le p_2\le p_3$, these groups are non-discrete when $R_1R_2R_3$ is elliptic.

This result cannot be generalised to all complex hyperbolic $(p_1,p_2,p_3;p_4)$-triangle groups, since there are examples in which $R_1R_2R_3$ is elliptic, but the group is discrete. The known examples include the Deraux lattice $\Gamma(4,4,4;5)$ \cite{der}, the Thompson groups $\Gamma(3,3,4;7)$ and $\Gamma(3,3,5;5)$ \cite{tho} (see also \cite{kpt1}), and normal subgroups of the Livn\'{e} groups \cite{liv} (see also \cite{par2}):
\begin{gather*}
\Gamma(5,5,5;5),\quad \Gamma(7,7,7;7),\quad \Gamma(8,8,8;8), \\
\Gamma(9,9,9;9),\quad \Gamma(10,10,10;10),\quad \Gamma(12,12,12;12),\quad \Gamma(18,18,18;18).
\end{gather*}
(We note that $\Gamma(6,6,6;6)$ is also discrete but in this case $R_1R_2R_3$ is parabolic.)
These groups and their higher-order analogues constitute a significant source of both arithmetic and non-arithmetic lattices in $\mathrm{SU}(2,1)$; see \cite{dpp1,dpp2}.

In Section~2.6.1 of \cite{tho}, Thompson used a computer-aided approach to find all groups $\Gamma(p_1,p_2,p_3;p_4)$ satisfying $p_1<31$ and $p_2,p_3,p_4<2000$ for which the three elements $R_1R_2R_3$, $R_1R_2R_3R_2$, and $R_1R_2R_1R_3$ are all elliptic of finite order. As a result, apart from the groups $\Gamma(p_1,p_1,p_1;p_1)$, Thompson identified three groups: $\Gamma(3,3,4;7)$, $\Gamma(3,3,5;5)$, and $\Gamma(4,4,4;5)$. The third of these groups had
previously been constructed by Deraux \cite{der}.

Our main result is the following:

\begin{thm}\label{thm}
Let $\Gamma(p_1,p_2,p_3;p_4)$ be a complex hyperbolic triangle group with 
$3\le p_1\le p_2\le p_3\le p_4\le\infty$.
If the element $R_1R_2R_3$ is elliptic, then $\Gamma(p_1,p_2,p_3;p_4)$ is non-discrete, unless it is the Deraux lattice, a Thompson group, or a normal subgroup of a Livn\'{e} group.
\end{thm}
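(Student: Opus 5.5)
We parametrise the group explicitly and then test discreteness by trace arithmetic. Take normalised polar vectors $\mathbf n_1,\mathbf n_2,\mathbf n_3$ for the mirrors $C_1,C_2,C_3$, so that $R_j$ has eigenvalues $(e^{2\pi i/3}\text{-type normalisation aside})$ given by a complex reflection of order two. Then $|\langle \mathbf n_{j-1},\mathbf n_{j+1}\rangle| = \cos(\pi/p_j)$. The only remaining invariant is the angular argument $\theta$ of $\langle\mathbf n_1,\mathbf n_2\rangle\langle\mathbf n_2,\mathbf n_3\rangle\langle\mathbf n_3,\mathbf n_1\rangle$.

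In these coordinates the traces of $R_1R_2R_3$ and $R_1R_3R_2R_3$ are explicit trigonometric polynomials in $c_j=\cos(\pi/p_j)$ and $e^{i\theta}$. Imposing that $C_1$ and $C_{323}$ meet at angle $\pi/p_4$ gives an equation fixing $\cos\theta$ in terms of $c_1,\dots,c_4$. So each quadruple $(p_1,p_2,p_3;p_4)$ determines the group up to conjugacy and complex conjugation. The ellipticity of $R_1R_2R_3$ becomes the inequality $f(\operatorname{tr} R_1R_2R_3)<0$ for Goldman's discriminant $f$, which carves out an explicit region of admissible quadruples.

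The non-discreteness tools come in two tiers.
\begin{enumerate}
\item For large $p_1$ we take the result of \cite{kpt1} as given: $\Gamma$ is non-discrete when $p_1\ge 32$. We then try to sharpen this bound using the elliptic element $R_1R_2R_3$ itself.
\item For the remaining finitely many values of $p_1$ we use the Conway--Jones argument \cite{cj}, following \cite{par3,ps,mon}. If $\Gamma$ is discrete and $R_1R_2R_3$ is elliptic, then it has finite order; otherwise its closure contains a circle, which contradicts discreteness because a regular elliptic of infinite order generates a non-discrete group. Its eigenvalues are then roots of unity, so $\operatorname{tr}R_1R_2R_3$ is a sum of three roots of unity with product fixed by the determinant normalisation. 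On the other hand, this trace lies in the cyclotomic field generated by $e^{2\pi i/p_j}$ together with the square root coming from $\cos\theta$.
\end{enumerate}

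Equating these two descriptions yields a vanishing sum of roots of unity of bounded length. By the Conway--Jones classification of short vanishing sums of roots of unity, only finitely many families of solutions exist. Each family is either an infinite one excluded by a separate argument, or it pins down $(p_1,p_2,p_3,p_4)$ to a finite list.

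We then treat the leftover candidates one by one. The ones with $p_1=p_2=p_3=p_4$ are the Livné normal subgroups, together with $\Gamma(6,6,6;6)$, where $R_1R_2R_3$ is parabolic. The others should be exactly $\Gamma(4,4,4;5)$, $\Gamma(3,3,4;7)$ and $\Gamma(3,3,5;5)$. Any other candidate is killed by exhibiting a further element, such as $R_1R_2R_3R_2$ or $R_1R_2R_1R_3$, that is elliptic of infinite order. This can be checked by the same Conway--Jones test, or by a Jørgensen-type inequality from \cite{kpt1}. Thompson's computer search \cite{tho} gives a consistency check for the final list.

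The main obstacle is the middle step. The field generated by $e^{2\pi i/p_j}$ and $\cos\theta$ is not obviously cyclotomic, because $\cos\theta$ is defined by a quadratic relation. One therefore has to organise the trace identity into a vanishing sum of roots of unity whose length is controlled uniformly in $(p_1,\dots,p_4)$. Handling the infinite families, such as $p_4=\infty$ or arbitrarily large $p_3,p_4$ with $p_1$ small, will need additional estimates bounding $p_3,p_4$ from the ellipticity region before the arithmetic can finish. I would attempt these bounds first by showing that the ellipticity region forces $p_4$ to be bounded for each fixed $p_1,p_2,p_3$.
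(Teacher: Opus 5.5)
There is a genuine gap, and it is exactly the step you flag as the main obstacle: you never obtain a relation of uniformly bounded length, and your proposed workaround would not work. The missing idea is to take real parts. The $p_4$-condition gives $\cos\alpha$ (your angular invariant $\theta$) as an explicit rational function of $r_1,\dots,r_4$; see \eqref{alpha and n}. Moreover $\mathrm{Re}\,\mathrm{tr}(R_1R_2R_3)=8r_1r_2r_3\cos\alpha-4(r_1^2+r_2^2+r_3^2)+3$ involves $\cos\alpha$ but not $\sin\alpha$, so the square root never enters. Substituting, and writing $r_j^2=(1+\cos 2\phi_j)/2$, gives \eqref{main equality 1}. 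This is a rational relation among at most seven cosines of rational multiples of $\pi$, namely those of $2\phi_1\pm2\phi_2$, $2\phi_3$, $2\phi_4$, $\theta_1,\theta_2,\theta_3$. Its length is independent of the $p_j$. Discarding the imaginary part costs nothing, because you only need a necessary condition for $R_1R_2R_3$ to have finite order.

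Note also that the Conway--Jones classification itself (at most four cosines) is too short for this. The paper needs Monaghan's list up to six terms (Theorem~\ref{de2}) and his structural result for seven (Corollary~\ref{de1}). It then prunes the decompositions into minimal subsums using $\theta_1+\theta_2+\theta_3\in 2\pi\mathbb{Z}$ and $\cos\theta_1+\cos\theta_2+\cos\theta_3<-1$.

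Your fallback, bounding $p_4$ for each fixed $(p_1,p_2,p_3)$ via the ellipticity region, is false well inside your range $p_1<32$. Regular ellipticity of $R_1R_2R_3$ is an open condition, and $\cos\alpha$, hence $\mathrm{tr}(R_1R_2R_3)$, depends continuously on $r_4$. So if $R_1R_2R_3$ is regular elliptic in the limiting group with $p_4=\infty$, it stays elliptic for all large finite $p_4$. This happens, for instance, for $\Gamma(14,14,20;\infty)$. There $\mathrm{tr}(R_1R_2R_3)\approx-1.328+2.208i$ and Goldman's discriminant is about $-0.08$. This is the phenomenon behind the Kamiya--Parker--Thompson and Kamiya results on $(p_1,p_1,p_4;\infty)$. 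Even a bound on $p_4$ for each triple would still leave infinitely many triples.

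With the real-part relation no such bound is needed, because the case analysis treats all $p_3,p_4$ at once. Combined with earlier results for some boundary subcases, it leaves only $\Gamma(10,30,30;30)$ when $p_1=p_2\ge 10$ or $9\le p_1<p_2$. That group is then shown non-discrete via an auxiliary elliptic element of infinite order. The only finiteness input the paper uses is for $3\le p_1\le 9$. There \cite{lp} shows $R_1R_2R_3$ is loxodromic apart from the known lattices and a few groups, such as $\Gamma(8,9,9;9)$ and $\Gamma(9,9,9;p_4)$, which are treated individually. Your \cite{kpt1} tier for $p_1\ge 32$ then becomes unnecessary.
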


We remark that Theorem~\ref{thm} also applies to the case where the $R_j$ have finite order greater than two, as considered in~\cite{dpp1,dpp2}. This is because in that case we must solve the same equation~\eqref{main equality 1} as we use in our main results;
compare Parker \cite{par3} and Parker and Paupert \cite{pp}.

In \cite{sch2} Schwartz gave a number of conjectures about complex hyperbolic triangle
groups. As indicated above, given $(p_1,p_2,p_3)$ there is a one parameter family of $(p_1,p_2,p_3)$-triangle
groups. Schwartz defined two group elements $W_A=R_1R_3R_2R_3$ and $W_B=R_1R_2R_3$. A group is said to be
\begin{enumerate}
    \item[(a)] of Type~A if there is a parameter value where $W_A$ is parabolic and $W_B$ is loxodromic; and
    \item[(b)] of Type~B if there is a parameter value where $W_B$ is parabolic and $W_A$ is loxodromic.
\end{enumerate}
In Conjecture~5.3 of \cite{sch2}, Schwartz conjectured that for groups of Type~B, the group is not discrete for all parameter values for which $W_B$ is elliptic. In Proposition~7.5 of \cite{par1} Parker showed that $\Gamma(12,12,12;12)$ and $\Gamma(18,18,18;18)$ are both
counterexamples to this conjecture. A consequence of our main theorem is the following.

\begin{cor}\label{cor non-discrete group}
Let $\Gamma(p_1,p_2,p_3;p_4)$ be a Type~B complex hyperbolic triangle group with $3\le p_1\le p_2\le p_3\le p_4\le\infty$, in which both $W_A$ and $W_B$ are both elliptic. If $\Gamma(p_1,p_2,p_3;p_4)$ is discrete then it must be
either $\Gamma(12,12,12;12)$ or $\Gamma(18,18,18;18)$. In particular, these two groups
are the only counterexamples to Conjecture~5.3 of \cite{sch2} with $W_A$ elliptic.
\end{cor}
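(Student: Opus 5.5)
The corollary follows from Theorem~\ref{thm} once we know which of the exceptional groups are of Type~B. The approach is to apply the theorem, then test each surviving triple $(p_1,p_2,p_3)$ for Type~B by computing along its one-parameter family.

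\textbf{Step 1: reduce to a finite list.} Since $W_A=R_1R_3R_2R_3$ is elliptic, $p_4$ is finite; by hypothesis $W_B=R_1R_2R_3$ is elliptic as well. So, if $\Gamma(p_1,p_2,p_3;p_4)$ is discrete, Theorem~\ref{thm} says it is one of the following:
\begin{itemize}
\item the Deraux lattice $\Gamma(4,4,4;5)$;
\item the Thompson groups $\Gamma(3,3,4;7)$ and $\Gamma(3,3,5;5)$;
\item one of the Livn\'e normal subgroups $\Gamma(p,p,p;p)$ with $p\in\{5,7,8,9,10,12,18\}$.
\end{itemize}
The case $p=6$ is absent because there $W_B$ is parabolic. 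Being of Type~A or Type~B depends only on the triple $(p_1,p_2,p_3)$, not on $p_4$. It therefore suffices to decide, for each triple $(3,3,4)$, $(3,3,5)$, $(4,4,4)$ and $(p,p,p)$ with $p\in\{5,7,8,9,10\}$, that it is not of Type~B. We must also confirm that $(12,12,12)$ and $(18,18,18)$ are of Type~B.

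\textbf{Step 2: set up the family.} Fix a triple and normalise $R_1,R_2,R_3$ in $\mathrm{SU}(2,1)$ using the standard Hermitian form. The angle conditions fix $|\langle n_i,n_j\rangle|$ in terms of $\cos(\pi/p_k)$. The remaining free parameter is the argument $\theta$ of the triple product $\langle n_1,n_2\rangle\langle n_2,n_3\rangle\langle n_3,n_1\rangle$. Both $\operatorname{tr}W_A$ and $\operatorname{tr}W_B$ are explicit functions of $\theta$: $\operatorname{tr}W_B$ involves $\cos\theta$, and $\operatorname{tr}W_A$ involves the real part of $e^{i\theta}$ times constants. Whether each element is elliptic, parabolic or loxodromic is then read off from Goldman's discriminant function $f(\tau)=|\tau|^4-8\operatorname{Re}(\tau^3)+18|\tau|^2-27$ evaluated at the trace.

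\textbf{Step 3: locate the parabolicity thresholds.} Starting at $\theta$ corresponding to the $\mathbb R$-Fuchsian group and moving $\theta$ monotonically, we compute the parameter value $\theta_A$ where $W_A$ first becomes parabolic and $\theta_B$ where $W_B$ does.
\begin{itemize}
\item A triple is of Type~B exactly when $\theta_B$ is reached while $W_A$ is already loxodromic.
\item It is of Type~A when the reverse happens.
\end{itemize}
For $(p,p,p)$ this comparison reduces to a single inequality in $\cos(\pi/p)$. Its solution should show Type~A for $p\le 10$ and Type~B for $p=12,18$. The latter is consistent with Proposition~7.5 of \cite{par1}, which shows these two groups are counterexamples. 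The three non-equilateral triples $(3,3,4)$, $(3,3,5)$ and $(4,4,4)$ are finitely many numerical checks of the same inequality.

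\textbf{Main obstacle.} The hard part is Step~3 near the borderline cases $p=9,10$. Here $\theta_A$ and $\theta_B$ may be close, so the sign of a transcendental expression in $\cos(\pi/p)$ must be certified. I would first try exact algebraic evaluation, since $\cos(\pi/p)$ is algebraic. If that is messy, I would fall back on interval arithmetic with rigorous error bounds. Either way, excluding Type~B for every triple except $(12,12,12)$ and $(18,18,18)$ yields the corollary.
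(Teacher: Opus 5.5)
Your proposal is correct and follows the paper's argument: the corollary is Theorem~\ref{thm} applied to the finite exceptional list, together with the fact that among the triples $(3,3,4)$, $(3,3,5)$, $(4,4,4)$ and $(p,p,p)$ with $p\in\{5,7,8,9,10,12,18\}$, only $(12,12,12)$ and $(18,18,18)$ are of Type~B. The paper gives no separate proof and treats these types as known, citing Parker's Proposition~7.5 for $(12,12,12)$ and $(18,18,18)$, whereas you propose to certify them directly. Two small points. In Step~3, ``already loxodromic'' should read ``still loxodromic''. The borderline case $p=10$ is exact and easy: at the $W_A$-parabolic parameter one gets $\mathrm{Re}\,\mathrm{tr}\,W_B=-3/2$ and $|\mathrm{tr}\,W_B|^2=7+\sqrt5>9$, so $W_B$ is loxodromic there and the triple is of Type~A.
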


This paper is organised as follows. In Section~\ref{background}, we present the necessary background on complex hyperbolic space and complex hyperbolic $(p_1,p_2,p_3;p_4)$-triangle groups. In Section~\ref{sec outline}, we outline the proof of Theorem~\ref{thm}. In Section~\ref{section p1<=9}, we consider the case $3\le p_1\le 9$. Finally, in Sections~\ref{section p1=p2} and~\ref{section p1<p2}, we complete the proof of Theorem~\ref{thm} in the cases $p_1=p_2$ and $p_1<p_2$, respectively.

\medskip

\noindent{\bf Acknowledgments.}
This paper was written while the first author was visiting Durham University. We would like to thank the China Scholarship Council for their financial support (Grant No. 202506130039).

\medskip

\section{Background}\label{background}
We refer the reader to \cite{gol} for the basics of complex hyperbolic geometry.

\subsection{Complex hyperbolic space}
Let $\mathbb{C}^{2,1}$ denote the $3$-dimensional complex vector space equipped with a Hermitian form of signature $(2,1)$,
for example 
$$
\langle \mathbf{z},\mathbf{w}\rangle
=z_1\overline{w}_1+z_2\overline{w}_2-z_3\overline{w}_3.
$$
Define
$$
V_-=\left\{\mathbf{z}\in\mathbb{C}^{2,1}\setminus\{\mathbf{0}\}:
\langle\mathbf{z},\mathbf{z}\rangle<0\right\},
\qquad
V_0=\left\{\mathbf{z}\in\mathbb{C}^{2,1}\setminus\{\mathbf{0}\}:
\langle\mathbf{z},\mathbf{z}\rangle=0\right\}.
$$
Note that $z_3\neq 0$ in both $V_-$ and $V_0$. Let 
${\mathbb P}:{\mathbb C}^{2,1}\longrightarrow {\mathbb C}{\mathbb P}^2$ be the standard projection map, which in the chart where $z_3\neq 0$ is given by
$$
\begin{bmatrix}
z_1\\
z_2\\
z_3
\end{bmatrix}
\mapsto
\begin{bmatrix}
z_1/z_3\\
z_2/z_3
\end{bmatrix},
$$
Then complex hyperbolic 2-space and its boundary are defined to be:
$$
\mathbf{H}_{\mathbb{C}}^2=\mathbb{P}(V_-)
\quad\text{and}\quad
\partial\mathbf{H}_{\mathbb{C}}^2=\mathbb{P}(V_0).
$$
Complex hyperbolic space $\mathbf{H}_{\mathbb{C}}^2$ is equipped with the Bergman metric $\rho$, which satisfies
\[
\cosh^2\left(\frac{\rho(z,w)}{2}\right)
=
\frac{\langle \mathbf{z},\mathbf{w}\rangle
      \langle \mathbf{w},\mathbf{z}\rangle}
     {\langle \mathbf{z},\mathbf{z}\rangle
      \langle \mathbf{w},\mathbf{w}\rangle}.
\]
Its group of holomorphic isometries is
\[
\mathrm{PU}(2,1)
=
\mathrm{SU}(2,1)/\{I,\omega I,\omega^2 I\},
\]
where $\omega$ is a primitive cube root of unity.

Isometries are classified into three types according to their fixed-point sets. More precisely, an isometry is called \textit{parabolic} if it has exactly one fixed point on $\partial\mathbf{H}_{\mathbb{C}}^2$, \textit{loxodromic} if it has exactly two distinct fixed points on $\partial\mathbf{H}_{\mathbb{C}}^2$, and \textit{elliptic} if it has at least one fixed point in $\mathbf{H}_{\mathbb{C}}^2$.

Let ${\bf n}\in{\mathbb C}^{2,1}$ satisfy $\langle{\bf n},{\bf n}\rangle>0$. Then
the set of vectors ${\bf z}\in{\mathbb C}^{2,1}$ satisfying $\langle{\bf z},{\bf n}\rangle=0$
is a two (complex) dimensional subspace of ${\mathbb C}^{2,1}$ containing points of $V_-$.
The image under ${\mathbb P}$ of this set is a copy of ${\mathbb C}{\mathbb P}^1$ that intersects ${\bf H}^2_{\mathbb C}$ in a complex geodesic $C$. The vector ${\bf n}$ is called a polar vector for $C$. The map
$$
R:{\bf z}\longmapsto -\mathbf z
+\frac{2\langle{\bf z},{\bf n}\rangle}{\langle{\bf n},{\bf n}\rangle}\,{\bf n}
$$
is called a complex reflection is given by a matrix in ${\rm SU}(2,1)$ of order two. When 
this matrix acts projectively on ${\bf H}^2_{\mathbb C}$ then it fixes each point of $C$, 
which is called its \emph{mirror}.

\medskip

\subsection{Complex hyperbolic $(p_1, p_2, p_3; p_4)$-triangle groups}\label{sec basic on triangle groups} 

Consider three complex geodesics $C_1$, $C_2$, $C_3$ with polar vectors ${\bf n}_1$, 
${\bf n}_2$, ${\bf n}_3$ respectively. Let $R_1$, $R_2$, $R_3$ be the complex reflections 
with mirrors $C_1$, $C_2$, $C_3$ respectively. The complex angle $\phi_j$ between
$C_{j-1}$ and $C_{j+1}$, with indices mod $3$, is given by
$$
\cos^2(\phi_j)
=\frac{\langle{\bf n}_{j-1},{\bf n}_{j+1}\rangle \langle{\bf n}_{j+1},{\bf n}_{j-1}\rangle }
{\langle{\bf n}_{j-1},{\bf n}_{j-1}\rangle \langle{\bf n}_{j+1},{\bf n}_{j+1}\rangle }.
$$
Then 
$$
{\rm tr}(R_{j-1}R_{j+1})=4\cos^2(\phi_j)-1=1+2\cos(2\phi_j).
$$
We also suppose that the complex angle between $C_1$ and $C_{323}=R_3(C_2)$ is $\phi_4$. That is
$$
\cos^2(\phi_4)
=\frac{\langle{\bf n}_1,R_3({\bf n}_2)\rangle \langle R_3({\bf n}_2),{\bf n}_1\rangle }
{\langle{\bf n}_1,{\bf n}_1\rangle \langle R_3({\bf n}_2),R_3({\bf n}_2)\rangle }
$$
and
$$
{\rm tr}(R_1R_3R_2R_3)=4\cos^2(\phi_4)-1=1+2\cos(2\phi_4).
$$
We are particularly interested in the cases where $\phi_j=\pi/p_j$ with $p_j\in{\mathbb N}$ for
$j=1,\,\ldots,\,4$. In this case, the group generated by $R_1$, $R_2$ and $R_3$ is called
a complex hyperbolic $(p_1,p_2,p_3;p_4)$-triangle group and is denoted 
$\Gamma(p_1,p_2,p_3;p_4)$.

\begin{thm}[Theorem 1.1 of Liao--Parker \cite{lp}]\label{thm-lp}
    After applying a suitable automorphism to a discrete group $\Gamma(p_1,p_2,p_3;p_4)$ we may suppose 
    $p_1\le p_2\le p_3\le p_4\le \infty$.
\end{thm}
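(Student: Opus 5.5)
The statement to prove is Theorem~\ref{thm-lp}, which the paper quotes from \cite{lp}. So I plan to reconstruct its proof using only the combinatorics of the generating triple $R_1,R_2,R_3$; no new geometry is needed.

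\textbf{The automorphisms.} The key observation is that several different triples of complex involutions generate the same group $\Gamma$. Each such triple realises the same group as a $(q_1,q_2,q_3;q_4)$-triangle group, with the four parameters permuted. The natural moves are:
\begin{enumerate}
\item[(i)] permuting $R_1,R_2,R_3$;
\item[(ii)] replacing $(R_1,R_2,R_3)$ by $(R_1,R_3R_2R_3,R_3)$.
\end{enumerate}
Move (ii) is legitimate because $R_3R_2R_3$ is again a complex involution, with mirror $C_{323}=R_3(C_2)$.

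For the new triple in (ii), the pairwise angles are as follows:
\begin{itemize}
\item $C_1$ and $C_{323}$ meet at angle $\pi/p_4$;
\item $C_{323}$ and $C_3$ meet at angle $\pi/p_1$, since $R_3$ is an isometry fixing $C_3$;
\item $C_1$ and $C_3$ meet at angle $\pi/p_2$.
\end{itemize}
The fourth element for the new triple is $R_1R_3(R_3R_2R_3)R_3 = R_1R_2$, which has angle $\pi/p_3$. So move (ii) sends $(p_1,p_2,p_3;p_4)$ to $(p_1,p_2,p_4;p_3)$, up to the indexing convention. In words, it swaps the role of $p_4$ with one of the triangle parameters.

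\textbf{Generating all permutations.} Conjugating by the permutations in (i) lets the fourth "diagonal" element be any of the three elements of the form $R_iR_kR_jR_k$. I then check that (i) and (ii) together generate the full symmetric group $S_4$ acting on $(p_1,p_2,p_3,p_4)$. Concretely:
\begin{itemize}
\item (i) gives $S_3$ on the first three entries, provided the diagonal element transforms correctly; here one must verify that the trace of $R_iR_kR_jR_k$ is symmetric enough.
\item (ii) supplies a transposition that involves the fourth entry.
\end{itemize}
Since $S_3$ together with such a transposition generates $S_4$, we can then reorder so that $p_1\le p_2\le p_3\le p_4$.

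\textbf{Main obstacle.} The difficulty is the subtle point in (i). The angle between $C_1$ and $C_{323}$ is not obviously preserved when we permute the indices. The trace of $R_1R_3R_2R_3$ and the traces of the other conjugates, such as $R_2R_1R_3R_1$, are in general different. So the single parameter $p_4$ need not be carried to itself.

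To handle this, I will work with an explicit normalization: polar vectors with $\langle \mathbf n_i,\mathbf n_i\rangle=2$ and off-diagonal entries $\rho,\sigma,\tau$ satisfying $|\rho|=2\cos(\pi/p_3)$, etc. In this normalization I compute
\[
\mathrm{tr}(R_1R_3R_2R_3)=|\rho\sigma-\bar\tau|^2-1,
\]
up to the indexing of $\rho,\sigma,\tau$. I then track how this expression behaves under each swap.

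If simple permutations do not preserve the structure, I would instead use moves of type (ii) at different vertices, replacing $R_j$ by $R_kR_jR_k$. Composing these gives the needed transpositions among all four parameters. Discreteness is used only to guarantee that every configuration reached this way has its relevant angles of the form $\pi/q$ with $q\in\mathbb N\cup\{\infty\}$, so that the result is again a group of the same type. This holds because elliptic elements of a discrete group have finite order.
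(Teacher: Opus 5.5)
\textbf{There is a genuine gap: your moves cannot arrange $p_2\le p_3$.} Your computation for move (ii) is correct. The swap $R_1\leftrightarrow R_2$ with $R_3$ kept in the middle also works, and it fixes $p_4$ because $R_3(R_2R_3R_1R_3)R_3=(R_1R_3R_2R_3)^{-1}$. But these two moves generate only the Klein four-group $\langle (p_1\,p_2),(p_3\,p_4)\rangle$. That group never exchanges an angle at $C_3$ (namely $p_1$ or $p_2$) with an angle that $C_1$ makes with $C_2$ or $C_{323}$ (namely $p_3$ or $p_4$). So you obtain $p_1\le p_2$ and $p_3\le p_4$, but not $p_2\le p_3$, which is the whole content of the statement.

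Your step (i) is not merely delicate; it is false. Once $R_3$ leaves the middle position, the fourth element becomes $R_2R_1R_3R_1$ or $R_1R_2R_3R_2$. Permuting indices in \eqref{1323} and substituting \eqref{alpha and n}, their traces are $4r_4^2-1+4(r_3^2-r_1^2)(4r_2^2-1)$ and $4r_4^2-1+4(r_3^2-r_2^2)(4r_1^2-1)$ respectively. The first is exactly the trace of $R_1R_2R_1R_3$ that the paper uses for $\Gamma(8,9,9;9)$ in Lemma~\ref{lem p<=9}. These are generically not of the form $4\cos^2(\pi/q)-1$, and they can exceed $3$. For example, for $(3,100,100;100)$ the first is about $12$, so that element is loxodromic.

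\textbf{Why the fallback does not repair this.} Replacing, say, $R_1$ by $R_2R_1R_2$ creates the mirror pair $(C_{212},C_3)$. Their product is conjugate to $R_1R_2R_3R_2$, the same uncontrolled element. Discreteness cannot force it to be elliptic. Even when it is elliptic of order $m$, the mirrors meet at angle $k\pi/m$ with $\gcd(k,m)=1$, not necessarily $\pi/m$. So "elliptic elements of discrete groups have finite order" does not produce a new $(q_1,q_2,q_3;q_4)$-structure.

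\textbf{A structural obstruction.} By Proposition~\ref{main alpha} and \eqref{alpha and n}, a $(p_1,p_2,p_3;p_4)$-group exists exactly when both of the following hold:
\begin{itemize}
\item $2\cos(2\pi/p_1)\cos(2\pi/p_2)<\cos(2\pi/p_3)+\cos(2\pi/p_4)$;
\item $|\cos\alpha|\le 1$.
\end{itemize}
This region is invariant under $(p_1\,p_2)$ and $(p_3\,p_4)$ but not under $(p_2\,p_3)$. For instance, $(100,3,100;3)$ occurs, but $(100,100,3;3)$ violates the first inequality. Hence no change of generators that works uniformly for all groups can realise a transposition mixing the two pairs.

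\textbf{What is missing.} You need a genuine use of discreteness in the case $\max\{p_1,p_2\}>\min\{p_3,p_4\}$. One option is to prove such groups are non-discrete, for example by trace arguments like those in Lemma~\ref{lem p<=9} or by J\o rgensen-type inequalities. The other is to exhibit, using discreteness, a non-obvious new generating triple. The paper itself gives no argument here and simply imports the statement from \cite{lp}; your proposal does not supply a substitute.
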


Following \cite{pra1}, the \textit{angular invariant} $\alpha$ of a $(p_1,p_2,p_3)$-triangle group is defined by
$$
\alpha
=
\arg\bigl(
\langle {\bf n}_1,{\bf n}_3\rangle
\langle {\bf n}_2,{\bf n}_1\rangle
\langle {\bf n}_3,{\bf n}_2\rangle
\bigr).
$$

\begin{prop}[Proposition~1 of \cite{pra1}]\label{main alpha}
A complex hyperbolic $(p_1,p_2,p_3)$-triangle group is uniquely determined, up to holomorphic isometry, by the triple 
$(p_1,p_2,p_3)$ and the angular invariant $\alpha$. For any $\alpha\in[0,2\pi]$, such a group exists if and only if
\begin{equation}\label{alpha}
\cos\alpha
<
\frac{r_1^2+r_2^2+r_3^2-1}{2r_1r_2r_3},
\end{equation}
where $r_k=\cos\phi_k$ and $\phi_k=\pi/p_k$.
\end{prop}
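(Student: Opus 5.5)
The plan is to translate everything into the Gram matrix of the polar vectors and then check signature. Since scaling ${\bf n}_k$ by a nonzero complex number $\lambda_k$ changes neither the mirror $C_k$ nor the reflection $R_k$, I will normalise $\langle{\bf n}_k,{\bf n}_k\rangle=1$. Under ${\bf n}_k\mapsto\lambda_k{\bf n}_k$ the product $\langle{\bf n}_1,{\bf n}_3\rangle\langle{\bf n}_2,{\bf n}_1\rangle\langle{\bf n}_3,{\bf n}_2\rangle$ is multiplied by $|\lambda_1|^2|\lambda_2|^2|\lambda_3|^2>0$. Hence $\alpha$ is well defined, and it is invariant under holomorphic isometries because these preserve the Hermitian form.

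The angle conditions force $|\langle{\bf n}_{j-1},{\bf n}_{j+1}\rangle|=r_j$. Using the three free unit phases, I make two of the off-diagonal entries real and positive. The Gram matrix then becomes
$$
H=\begin{pmatrix}1 & r_3 & r_2e^{i\alpha}\\ r_3 & 1 & r_1\\ r_2e^{-i\alpha} & r_1 & 1\end{pmatrix}.
$$
Here the placement of $e^{\pm i\alpha}$ is fixed by the definition of $\alpha$; the exact placement only affects the sign convention.

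\emph{Uniqueness.} The determinant computation below shows $\det H\neq 0$, so ${\bf n}_1,{\bf n}_2,{\bf n}_3$ form a basis of $\mathbb C^{2,1}$. Two such triples with the same Gram matrix $H$ are related by the linear map sending one basis to the other. This map preserves the Hermitian form, so it lies in ${\rm U}(2,1)$, and after multiplying by a scalar it lies in ${\rm SU}(2,1)$. It conjugates the $R_k$ of one configuration to those of the other. Hence the group is determined by $(p_1,p_2,p_3)$ and $\alpha$.

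\emph{Existence.} A triple of vectors with Gram matrix $H$ exists in $\mathbb C^{2,1}$ exactly when $H$ has signature $(2,1)$: such an $H$ is congruent to ${\rm diag}(1,1,-1)$, and the columns of the congruence give the ${\bf n}_k$. Each ${\bf n}_k$ has $\langle{\bf n}_k,{\bf n}_k\rangle=1>0$, so it is a polar vector of a genuine complex geodesic, and the angles are the prescribed $\pi/p_k$.

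It remains to characterise signature $(2,1)$. Expanding the determinant gives
$$
\det H = 1-r_1^2-r_2^2-r_3^2+2r_1r_2r_3\cos\alpha .
$$
I will use a trace argument rather than Sylvester's criterion, because when some $p_k=\infty$ we have $r_k=1$ and a $2\times 2$ principal minor vanishes. If $\det H<0$, the product of the three real eigenvalues is negative, so one or three of them are negative. Since ${\rm tr}\,H=3>0$, exactly one is negative, giving signature $(2,1)$. Conversely, signature $(2,1)$ forces $\det H<0$. Since $r_1r_2r_3>0$, the condition $\det H<0$ is precisely inequality~\eqref{alpha}.

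The main point needing care is the bookkeeping of phases and the sign convention for $\alpha$. The values $\alpha$ and $-\alpha$ give anti-holomorphically equivalent configurations, which need not be holomorphically equivalent. This is why the statement is up to holomorphic isometry and allows all $\alpha\in[0,2\pi]$. The degenerate case $r_k=1$ is already handled by the trace argument above.
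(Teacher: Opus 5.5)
The paper gives no proof of this proposition; it quotes it from Pratoussevitch. Your Gram-matrix argument is the standard route, and most of it is correct: the phase normalisation, the placement of $e^{i\alpha}$ (exactly right for $H_{ab}=\langle{\bf n}_a,{\bf n}_b\rangle$), the formula for $\det H$, and the trace argument that also covers $r_k=1$. But one step is false as written, and both the ``only if'' direction and your uniqueness argument depend on it.

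You claim that three vectors with Gram matrix $H$ exist in $\mathbb{C}^{2,1}$ exactly when $H$ has signature $(2,1)$. That holds only for linearly \emph{independent} vectors. Suppose $\det H=0$ and all $r_j<1$. Then $H$ is positive semidefinite of rank $2$, since its $2\times 2$ principal minors sum to $3-r_1^2-r_2^2-r_3^2>0$. Such an $H$ is realised by three unit vectors spanning a positive definite plane $W$. All three mirrors then pass through the point $\mathbb{P}(W^\perp)\in\mathbf{H}^2_{\mathbb C}$, and every angle condition still holds. For example, $(p_1,p_2,p_3)=(3,3,3)$ with $\alpha=\pi$ is realised by $(1,0,0)$, $(1/2,\sqrt3/2,0)$, $(-1/2,\sqrt3/2,0)$, and this gives equality in \eqref{alpha}.

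So linear independence of ${\bf n}_1,{\bf n}_2,{\bf n}_3$ does not follow from the angle conditions. In your uniqueness step, citing ``the determinant computation below'' to get $\det H\neq 0$ is circular, because that computation gives $\det H\neq0$ only once the strict inequality is already known.

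The fix is to make non-degeneracy explicit. For any realised configuration, the following are equivalent:
\begin{itemize}
\item the ${\bf n}_k$ are linearly dependent;
\item $\det H=0$;
\item equality holds in \eqref{alpha};
\item the closures of the three mirrors share a point of $\mathbf{H}^2_{\mathbb C}\cup\partial\mathbf{H}^2_{\mathbb C}$.
\end{itemize}
The last equivalence uses that a plane spanned by two unit polar vectors with $|\langle{\bf n}_a,{\bf n}_b\rangle|\le 1$ cannot have signature $(1,1)$, so $W^\perp$ is a negative or null line. These concurrent configurations are exactly what the strict inequality excludes. Once you state that a triangle group means non-concurrent mirrors, both directions and the uniqueness claim go through as you wrote them.
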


For a complex hyperbolic $(p_1,p_2,p_3;p_4)$-triangle group, suppose that the element $R_1R_2R_3$ is elliptic and has finite order. Then $R_1R_2R_3$ has three eigenvalues of unit modulus whose product is equal to $1$. We write these eigenvalues as $e^{i\theta_1}$, $e^{i\theta_2}$, and $e^{i\theta_3}$, where $\theta_j\in\mathbb{Q}\pi$ and
\begin{equation}\label{eq-theta-sum}
\theta_1+\theta_2+\theta_3=2k\pi
\end{equation}
for some $k\in\mathbb{Z}$. 
The traces of elements of a complex hyperbolic triangle group can be obtained from the results of \cite{pra1,par3} or by direct computation using the matrix representations of complex reflections given in \cite{kpt1}.
Using the traces of $R_1R_3R_2R_3$ and $R_1R_2R_3$, we obtain
\begin{equation}\label{1323}
\mathrm{tr}(R_1R_3R_2R_3)
= 4r_4^2-1
=16r_1^2r_2^2+4r_3^2-1-16r_1r_2r_3\cos\alpha,
\end{equation}
and
\begin{equation}\label{123}
\mathrm{tr}(R_1R_2R_3)
=
e^{i\theta_1}+e^{i\theta_2}+e^{i\theta_3}
=
8r_1r_2r_3e^{i\alpha}
-4(r_1^2+r_2^2+r_3^2)+3.
\end{equation}
In particular, this shows we may take ${\rm tr}(R_1R_3R_2R_3)$ instead of $\alpha$ as the
parameter for the space of $(p_1,p_2,p_3)$-triangle groups. 
 It follows from \eqref{123} that inequality~\eqref{alpha} is equivalent to
\begin{equation}\label{signature (2,1)}
 \mathrm{Re}\bigl(\mathrm{tr}(R_1R_2R_3)\bigr)
=\cos(\theta_1)+\cos(\theta_2)+\cos(\theta_3)<-1.   
\end{equation}

\begin{lem}\label{lem-not-Zpi}
   Suppose that $\theta_1$, $\theta_2$, $\theta_3$ satisfy 
   \eqref{eq-theta-sum} and \eqref{signature (2,1)}.
   Then $\theta_j\not\in{\mathbb Z}\pi$.
\end{lem}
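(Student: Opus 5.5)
Argue by contradiction: suppose some $\theta_j\in{\mathbb Z}\pi$. By symmetry of \eqref{eq-theta-sum} and \eqref{signature (2,1)} in the three angles, we may take $j=3$. Then $e^{i\theta_3}=\pm1$, and the plan is to split into the two cases $\theta_3\equiv 0$ and $\theta_3\equiv\pi \pmod{2\pi}$. In each case, use \eqref{eq-theta-sum} to write $\theta_2$ in terms of $\theta_1$, and show that the sum of cosines is at least $-1$, contradicting \eqref{signature (2,1)}.

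If $\theta_3\equiv0$, then $\theta_2\equiv-\theta_1 \pmod{2\pi}$, so $\cos\theta_2=\cos\theta_1$. Hence
$$\cos\theta_1+\cos\theta_2+\cos\theta_3=1+2\cos\theta_1\ge -1,$$
contradicting \eqref{signature (2,1)}.

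If $\theta_3\equiv\pi$, then $\theta_2\equiv \pi-\theta_1 \pmod{2\pi}$, so $\cos\theta_2=-\cos\theta_1$. The sum of cosines is therefore exactly $-1$, which is again not $<-1$.

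Both cases give a contradiction, so no $\theta_j$ lies in ${\mathbb Z}\pi$. There is no real obstacle here. The only points to watch are that the angles are defined only modulo $2\pi$ (harmless, since only cosines and the sum modulo $2\pi$ enter) and that the inequality \eqref{signature (2,1)} is strict. The strictness is exactly what excludes the boundary case $\theta_3\equiv\pi$.
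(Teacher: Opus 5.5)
Your proof is correct and is essentially the paper's argument. The paper likewise splits into the cases where the angle is an even or an odd multiple of $\pi$, obtaining the sum $1+2\cos\theta\ge -1$ and exactly $-1$ respectively. Each case contradicts the strict inequality \eqref{signature (2,1)}; the only difference is that the paper singles out $\theta_1$ where you use $\theta_3$.
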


\begin{proof}
Suppose for a contradiction that $\theta_1\in{\mathbb Z}\pi$. First, suppose that
$\theta_1=(2\ell+1)\pi$ for $\ell\in{\mathbb Z}$. Then $\cos(\theta_1)=-1$ and 
$$
\theta_2+\theta_3=2k\pi-\theta_1=(2k-2\ell-1)\pi.
$$
Thus $\cos(\theta_3)=-\cos(\theta_2)$ and
$$
\cos(\theta_1)+\cos(\theta_2)+\cos(\theta_3)=-1+\cos(\theta_2)-\cos(\theta_2)=-1.
$$
Next suppose that $\theta_1=2\ell\pi$. Then $\cos(\theta_1)=1$ and 
$\theta_2+\theta_3=2(k-\ell)\pi$ and so $\cos(\theta_3)=\cos(\theta_2)$. Hence
$$
\cos(\theta_1)+\cos(\theta_2)+\cos(\theta_3)=1+2\cos(\theta_2)\ge -1.
$$
In both cases we have a contradiction.    
\end{proof}

\medskip

We rewrite \eqref{1323} as
\begin{equation}\label{alpha and n}
\cos\alpha
=
\frac{4r_1^2r_2^2+r_3^2-r_4^2}
     {4r_1r_2r_3}.
\end{equation}
Substituting \eqref{alpha and n} into \eqref{123} and then taking the real part yields
\begin{equation}\label{equality}
8r_1^2r_2^2-4r_1^2-4r_2^2-2r_3^2-2r_4^2
-\cos\theta_1-\cos\theta_2-\cos\theta_3+3
=
0.
\end{equation}
Following Monaghan \cite{mon}, equation \eqref{equality} can be simplified to
\begin{equation}\label{main equality 1}
\cos(2\phi_1+2\phi_2)
+\cos(2\phi_1-2\phi_2)
-\cos(2\phi_3)
-\cos(2\phi_4)
-\cos\theta_1-\cos\theta_2-\cos\theta_3
=1.
\end{equation}

We then obtain the following result.

\begin{lem}\label{nec}
Let $\Gamma(p_1,p_2,p_3;p_4)$ be a discrete complex hyperbolic triangle group, and suppose that $R_1R_2R_3$ is elliptic with eigenvalues $e^{i\theta_j}$ for $j=1,2,3$. If $R_1R_2R_3$ has finite order, then $\theta_j\in\mathbb{Q}\pi\setminus\mathbb{Z}\pi$ for $j=1,2,3$, and these angles satisfy the equations~\eqref{eq-theta-sum}~\eqref{signature (2,1)} and~\eqref{main equality 1}.
\end{lem}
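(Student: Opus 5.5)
The statement is essentially a bookkeeping consequence of the identities derived above, so the plan is to assemble them in order.

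\emph{Rationality of the $\theta_j$.} Since $R_1R_2R_3\in\mathrm{SU}(2,1)$ is elliptic of finite order $m$, its eigenvalues are $m$-th roots of unity. Hence we may write them as $e^{i\theta_j}$ with $\theta_j\in\mathbb{Q}\pi$. Their product is $\det(R_1R_2R_3)=1$, which gives \eqref{eq-theta-sum} for some $k\in\mathbb{Z}$.

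\emph{Inequality \eqref{signature (2,1)} and the exclusion of $\mathbb{Z}\pi$.} The group exists, so Proposition~\ref{main alpha} gives inequality \eqref{alpha}. We then take the real part of \eqref{123} and substitute. Writing $S=r_1^2+r_2^2+r_3^2$, this gives $\mathrm{Re}\,\mathrm{tr}(R_1R_2R_3)=8r_1r_2r_3\cos\alpha-4S+3$. By \eqref{alpha}, this is strictly less than $4(S-1)-4S+3=-1$, which is \eqref{signature (2,1)}. Here we use $r_j=\cos(\pi/p_j)>0$ for $p_j\ge3$, so that multiplying by $8r_1r_2r_3$ preserves the inequality. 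With \eqref{eq-theta-sum} and \eqref{signature (2,1)} in hand, Lemma~\ref{lem-not-Zpi} shows that $\theta_j\notin\mathbb{Z}\pi$.

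\emph{Equation \eqref{main equality 1}.} First, \eqref{alpha and n} is just a rearrangement of \eqref{1323}. Substituting it into the real part of \eqref{123} gives
\[
2(4r_1^2r_2^2+r_3^2-r_4^2)-4(r_1^2+r_2^2+r_3^2)+3=\cos\theta_1+\cos\theta_2+\cos\theta_3,
\]
which is \eqref{equality}. To reach \eqref{main equality 1}, apply $2r_j^2=1+\cos 2\phi_j$ and the product formula
\[
8r_1^2r_2^2=2(1+\cos2\phi_1)(1+\cos2\phi_2)=2+2\cos2\phi_1+2\cos2\phi_2+\cos(2\phi_1+2\phi_2)+\cos(2\phi_1-2\phi_2).
\]
After expanding, the terms $\pm2\cos2\phi_1$ and $\pm2\cos2\phi_2$ cancel. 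The constants collect as $2-2-2-1-1+3=-1$, and the equation becomes exactly \eqref{main equality 1}.

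The only step needing care is the sign argument for \eqref{signature (2,1)}, namely checking that $r_1r_2r_3>0$. Discreteness itself is not used beyond the setup: by Theorem~\ref{thm-lp} it lets us order the $p_j$ with $p_1\ge3$, and $p_4=\infty$ is harmless since then $r_4=1$.
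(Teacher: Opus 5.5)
Your proposal is correct and follows the paper's route. The paper states the lemma as an immediate consequence of the preceding derivations: roots-of-unity eigenvalues with product $1$, the equivalence of \eqref{alpha} with \eqref{signature (2,1)}, Lemma~\ref{lem-not-Zpi}, and the simplification of \eqref{equality} to \eqref{main equality 1}; your sign argument and your constant/cosine bookkeeping for these steps are accurate. One small correction: $p_j\ge 3$, and hence $r_j>0$, comes from the paper's standing setup, not from Theorem~\ref{thm-lp}, which only reorders the $p_j$.
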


For a fixed complex hyperbolic $(p_1,p_2,p_3;p_4)$-triangle group whose element $R_1R_2R_3$ is elliptic, one strategy for proving that $R_1R_2R_3$ has infinite order is to show that equation~\eqref{main equality 1} admits no solutions for which $\theta_j$ is a rational multiple of $\pi$.
For this purpose, we repeatedly apply the following result from Monaghan \cite{mon}, which generalises the results of Conway and Jones \cite{cj}, see also Parker \cite{par1}. Monaghan
lists all rational sums of at most seven cosines. We only need to use the details of the sums of length at most six, together with the general form of sums of length seven. To save space, we do not list all the sums of length seven. 

\begin{cor}[Corollary of Theorem~2.4.3.1 in \cite{mon}]\label{de1}
Suppose that we have exactly seven distinct rational multiples of $\pi$ in the interval $(0,\pi/2)$ such that a rational linear combination of their cosines is equal to a rational number $r$, and that no proper subset has this property. Then, after
multiplying by a constant if necessary, the coefficient of each cosine term is $\pm1$, and the resulting rational value $r$ is either $0$ or $1/2$.
\end{cor}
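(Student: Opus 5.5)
The plan is to read Corollary~\ref{de1} off Monaghan's classification (Theorem~2.4.3.1 of \cite{mon}) after two normalisations: fixing the scalar ambiguity in the coefficients, and putting the angles into the form Monaghan uses. Suppose
$$\sum_{j=1}^{7} q_j\cos\beta_j=r,$$
with $q_j\in\mathbb{Q}\setminus\{0\}$, distinct $\beta_j\in\mathbb{Q}\pi\cap(0,\pi/2)$, $r\in\mathbb{Q}$, and no proper subset of the cosines admitting such a relation.

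\emph{Step 1: the coefficients are determined up to a scalar.} Suppose a second relation $\sum q'_j\cos\beta_j=r'$ on the same seven angles is not proportional to the first (as a vector $(q_1,\dots,q_7,r)$). Choose an index $j_0$ with $q'_{j_0}\neq 0$. Then $q'_{j_0}(\text{first})-q_{j_0}(\text{second})$ is a nonzero rational relation with the $\cos\beta_{j_0}$ term removed. This is a relation of the same kind on a proper subset of the cosines, contradicting minimality. Note also that no single $\cos\beta_j$ is rational. In particular $\beta_j\neq\pi/3$, since otherwise that one cosine would already be a proper subset with the property. So each term genuinely contributes.

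\emph{Step 2: match Monaghan's normal form.} Theorem~2.4.3.1 of \cite{mon}, which extends Conway--Jones \cite{cj}, lists all minimal rational relations among at most seven cosines of rational multiples of $\pi$. In that list the angles are taken in $[0,\pi/2]$, using $\cos(\pi-x)=-\cos x$ to fold angles into this range and absorbing the resulting signs into the coefficients. Our hypotheses already place the $\beta_j$ in $(0,\pi/2)$ and require them distinct, so our relation, suitably normalised, must appear in the length-seven part of the list. The exclusion of the endpoints means no $\cos 0$ or $\cos(\pi/2)$ terms occur; such terms would merely shift $r$ or vanish, and would contradict minimality or the count of seven.

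\emph{Step 3: inspect the list.} By Step~1 we may rescale so that the coefficients match the listed representative. One then checks, entry by entry, that every length-seven relation in \cite{mon} has all coefficients $\pm1$ and right-hand side $0$ or $1/2$. Structurally this reflects the Conway--Jones mechanism: writing $2\cos\beta=\zeta+\zeta^{-1}$, a minimal relation comes from a minimal vanishing sum of roots of unity built from sums like $1+\omega+\omega^2$ and $\sum_{k}\zeta_5^k$. These produce only unit coefficients and constant terms $0$ or $\tfrac12$, for instance from $\cos(\pi/3)$-type contributions. The main obstacle is simply this bookkeeping: we must confirm that Monaghan's normalisation of angles and signs matches ours, so that no listed relation with a coefficient such as $2$ survives after the folding $x\mapsto\pi-x$. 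Since the statement asserts only the general shape, this verification is the whole content of the proof, and I would carry it out by scanning the families in \cite{mon} rather than rederiving the classification.
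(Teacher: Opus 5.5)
Your proposal is correct and follows essentially the paper's route. The paper gives no separate argument; it reads the statement off Monaghan's classification of minimal rational cosine relations of length at most seven (Theorem~2.4.3.1 of \cite{mon}), exactly as in your Steps~2 and~3. Your Step~1 (a minimal relation is unique up to scaling) and your folding remarks are sound extra bookkeeping that make ``after multiplying by a constant'' precise, while the Conway--Jones heuristic at the end of Step~3 is unnecessary and would not be a proof on its own, since the content is entirely the inspection of Monaghan's list.
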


\begin{thm}[Theorem~2.4.3.1 of \cite{mon}]\label{de2}
    Suppose we have at most six distinct rational multiples of $\pi$, for which some rational linear combination of their cosines is rational but no proper subset has this property. If $\varphi\in (0, \pi)$ and all other angles are normalised to lie in $(0, \pi/2)$, then the appropriate linear combination is proportional to one of the following:
\begin{enumerate}
\item[(i)] $\displaystyle{0 = \sum_{k=0}^{2} \cos\left(\varphi+\frac{2k\pi}{3} \right ), \quad \varphi\in (0, \pi), \quad \varphi\ne \frac{m\pi}{6}}$, 
\item[(ii)] $\displaystyle{0 = \sum_{k=0}^{4} \cos\left(\varphi+\frac{2k\pi}{5} \right ), \quad \varphi\in (0, \pi), \quad \varphi\ne \frac{m\pi}{10}, \frac{l\pi}{15}}$, 
\item[(iii)] $\displaystyle{0 = \sum_{k=1}^{2} \cos\left(\varphi+\frac{2k\pi}{3} \right )-\sum_{k=1}^{4} \cos\left(\varphi+\frac{2k\pi}{5} \right ), \quad \varphi\in (0, \pi), \quad \varphi\ne \frac{m\pi}{6}, \frac{l\pi}{10}, \frac{s\pi}{15} }$, 
\item[(iv)] $\displaystyle{\frac{1}{2}  =  \cos \left(\frac{\pi}{3}\right) }$, 
\item[(v)] $\displaystyle{\frac{1}{2}  =  \cos \left(\frac{\pi}{5}\right)- \cos \left(\frac{2\pi}{5}\right) }$, 
\item[(vi)] $\displaystyle{\frac{1}{2}  =  \cos \left(\frac{\pi}{5}\right)- \cos \left(\frac{\pi}{15}\right)+ \cos \left(\frac{4\pi}{15}\right) }$, 
\item[(vii)] $\displaystyle{\frac{1}{2}  =  -\cos \left(\frac{2\pi}{5}\right)+ \cos \left(\frac{2\pi}{15}\right)- \cos \left(\frac{7\pi}{15}\right) }$, 
\item[(viii)] $\displaystyle{\frac{1}{2}  =  -\cos \left(\frac{\pi}{15}\right)+ \cos \left(\frac{2\pi}{15}\right)+ \cos \left(\frac{4\pi}{15}\right)-\cos \left(\frac{7\pi}{15}\right) }$,  
\item[(ix)] $\displaystyle{\frac{1}{2}  =  \cos \left(\frac{\pi}{7}\right)- \cos \left(\frac{2\pi}{7}\right)+ \cos \left(\frac{3\pi}{7}\right) }$,  
\item[(x)] $\displaystyle{\frac{1}{2}  =  \cos \left(\frac{\pi}{7}\right)- \cos \left(\frac{2\pi}{7}\right)+ \cos \left(\frac{2\pi}{21}\right)-\cos \left(\frac{5\pi}{21}\right)}$, 
\item[(xi)] $\displaystyle{\frac{1}{2}  =  \cos \left(\frac{\pi}{7}\right)+ \cos \left(\frac{3\pi}{7}\right)- \cos \left(\frac{\pi}{21}\right)+\cos \left(\frac{8\pi}{21}\right) }$,   
\item[(xii)] $\displaystyle{\frac{1}{2}  =  -\cos \left(\frac{2\pi}{7}\right)+ \cos \left(\frac{3\pi}{7}\right)+ \cos \left(\frac{4\pi}{21}\right)+\cos \left(\frac{10\pi}{21}\right) }$,  
\item[(xiii)] $\displaystyle{\frac{1}{2} =  \cos \left(\frac{\pi}{7}\right)- \cos \left(\frac{\pi}{21}\right)+ \cos \left(\frac{2\pi}{21}\right)-\cos \left(\frac{5\pi}{21}\right)+\cos \left(\frac{8\pi}{21}\right) }$,  
\item[(xiv)] $\displaystyle{\frac{1}{2}  =  -\cos \left(\frac{2\pi}{7}\right)+\cos \left(\frac{2\pi}{21}\right)+ \cos \left(\frac{4\pi}{21}\right)-\cos \left(\frac{5\pi}{21}\right)+\cos \left(\frac{10\pi}{21}\right) }$,   
\item[(xv)] $\displaystyle{\frac{1}{2}  =  \cos \left(\frac{3\pi}{7}\right)- \cos \left(\frac{\pi}{21}\right)+ \cos \left(\frac{4\pi}{21}\right)+\cos \left(\frac{8\pi}{21}\right)+\cos \left(\frac{10\pi}{21}\right) }$,   
\item[(xvi)] $\displaystyle{\frac{1}{2}  =  -\cos \left(\frac{\pi}{21}\right)+ \cos \left(\frac{2\pi}{21}\right)+ \cos \left(\frac{4\pi}{21}\right)-\cos \left(\frac{5\pi}{21}\right)+\cos \left(\frac{8\pi}{21}\right)+\cos \left(\frac{10\pi}{21}\right) }$,  
\item[(xvii)] $\displaystyle{\frac{1}{2}  =  \cos \left(\frac{\pi}{11}\right)- \cos \left(\frac{2\pi}{11}\right)+ \cos \left(\frac{3\pi}{11}\right)-\cos \left(\frac{4\pi}{11}\right)+\cos \left(\frac{5\pi}{11}\right) }$, 
\item[(xviii)] $\displaystyle{\frac{1}{2}  =  \cos \left(\frac{\pi}{11}\right)+ \cos \left(\frac{3\pi}{11}\right)- \cos \left(\frac{4\pi}{11}\right)+\cos \left(\frac{5\pi}{11}\right)-\cos \left(\frac{5\pi}{33}\right)+\cos \left(\frac{16\pi}{33}\right) }$,  
\item[(xix)] $\displaystyle{\frac{1}{2}  =  \cos \left(\frac{\pi}{11}\right)- \cos \left(\frac{2\pi}{11}\right)+ \cos \left(\frac{3\pi}{11}\right)+\cos \left(\frac{5\pi}{11}\right)-\cos \left(\frac{\pi}{33}\right)+\cos \left(\frac{10\pi}{33}\right) }$,   
\item[(xx)] $\displaystyle{\frac{1}{2}  =  \cos \left(\frac{\pi}{11}\right)- \cos \left(\frac{2\pi}{11}\right)+ \cos \left(\frac{3\pi}{11}\right)-\cos \left(\frac{4\pi}{11}\right)+\cos \left(\frac{4\pi}{33}\right)-\cos \left(\frac{7\pi}{33}\right) }$,   
\item[(xxi)] $\displaystyle{\frac{1}{2}  =  \cos \left(\frac{\pi}{11}\right)- \cos \left(\frac{3\pi}{11}\right)- \cos \left(\frac{4\pi}{11}\right)+\cos \left(\frac{5\pi}{11}\right)+\cos \left(\frac{2\pi}{33}\right)-\cos \left(\frac{13\pi}{33}\right) }$,   
\item[(xxii)] $\displaystyle{\frac{1}{2}  =  -\cos \left(\frac{2\pi}{11}\right)+ \cos \left(\frac{3\pi}{11}\right)- \cos \left(\frac{4\pi}{11}\right)+\cos \left(\frac{5\pi}{11}\right)+\cos \left(\frac{8\pi}{33}\right)+\cos \left(\frac{14\pi}{33}\right) }$,  
\item[(xxiii)] $\displaystyle{\frac{1}{2}  =  \cos \left(\frac{\pi}{13}\right)- \cos \left(\frac{2\pi}{13}\right)+ \cos \left(\frac{3\pi}{13}\right)-\cos \left(\frac{4\pi}{13}\right)+\cos \left(\frac{5\pi}{13}\right)-\cos \left(\frac{6\pi}{13}\right) }$,  
\item[(xxiv)] $\displaystyle{\frac{1}{2}  =  \cos \left(\frac{\pi}{7}\right)+ \cos \left(\frac{3\pi}{7}\right)- \cos \left(\frac{3\pi}{35}\right)+\cos \left(\frac{4\pi}{35}\right)-\cos \left(\frac{11\pi}{35}\right)-\cos \left(\frac{17\pi}{35}\right) }$,  
\item[(xxv)] $\displaystyle{\frac{1}{2}  =  \cos \left(\frac{\pi}{7}\right)- \cos \left(\frac{2\pi}{7}\right)- \cos \left(\frac{\pi}{35}\right)+\cos \left(\frac{6\pi}{35}\right)+\cos \left(\frac{8\pi}{35}\right)-\cos \left(\frac{13\pi}{35}\right) }$,  
\item[(xxvi)] $\displaystyle{\frac{1}{2}  =  -\cos \left(\frac{2\pi}{7}\right)+ \cos \left(\frac{3\pi}{7}\right)+ \cos \left(\frac{2\pi}{35}\right)-\cos \left(\frac{9\pi}{35}\right)+\cos \left(\frac{12\pi}{35}\right)+\cos \left(\frac{16\pi}{35}\right) }$. 
 \end{enumerate}

\end{thm}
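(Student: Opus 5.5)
The plan is to translate the statement into one about vanishing sums of roots of unity and then follow the Conway--Jones strategy \cite{cj}, which Monaghan \cite{mon} extends. Write $2\cos x=\zeta+\zeta^{-1}$ with $\zeta=e^{ix}$. A relation $\sum_{j}a_j\cos(\psi_j)=r$ with $a_j,r\in\mathbb Q$ and at most six angles then becomes a vanishing $\mathbb Q$-linear combination of at most $13$ roots of unity: the $\zeta_j^{\pm1}$ together with $1$, which carries coefficient $-2r$. Since $\cos(\pi-x)=-\cos x$, every angle except possibly one (the free angle $\varphi$, which may be irrational) can be moved into $(0,\pi/2)$ at the cost of a sign. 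The minimality hypothesis (no proper subset works) translates into minimality of the corresponding vanishing sum, up to the pairing $\zeta\leftrightarrow\zeta^{-1}$.

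\textbf{Step 1 (removing repeated primes).} Let $N$ be the least common order of the roots involved, after a common rotation in the case with a free angle. If $p^2\mid N$, write each root as $\eta\,\xi$ with $\xi^{N/p}=1$ and $\eta$ ranging over a set of coset representatives of order dividing $p$ over $\mathbb Q(\zeta_{N/p})$. Because $[\mathbb Q(\zeta_N):\mathbb Q(\zeta_{N/p})]=p$ when $p^2\mid N$, these representatives are linearly independent. Hence the sum splits into independent vanishing subsums, and minimality forces only one to be nonempty. So after rotation we may assume $N$ is squarefree, up to a factor $2$ which is absorbed by the sign conventions.

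\textbf{Step 2 (bounding the primes).} For the largest prime $p\mid N$, write the sum as $\sum_{j=0}^{p-1}\zeta_p^{\,j}A_j$ with $A_j\in\mathbb Q(\zeta_{N/p})$. The only relation is $1+\zeta_p+\cdots+\zeta_p^{p-1}=0$, so all $A_j$ are equal. Counting nonzero terms then gives the Conway--Jones bound: a minimal sum involving $p$ needs at least $p-2$ plus the length contributed by the other primes. With at most $13$ roots this gives $p\le 13$ and restricts which products of $3,5,7,11,13$ can occur. Only $15,21,33,35$ survive as composite levels, matching the list: for example, $11\cdot 3$ costs about $9+2$ terms, whereas $11\cdot 5$ is already too long.

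\textbf{Step 3 (finite enumeration and reconversion).} For each surviving level, enumerate the minimal vanishing sums. One way is to take the basic relations $\sum_{k}\zeta_p^k=0$ and form the differences obtained by substituting one relation into another, as in Conway--Jones. Then impose invariance under complex conjugation, since the sum came from cosines, and check which combinations have the required shape and size. Sums that do not contain $1$ give the families (i)--(iii), with free parameter $\varphi$. Here the excluded values $\varphi=m\pi/6$, $l\pi/10$, $s\pi/15$ are exactly those at which two terms coincide, or a term hits $0$ or $\pi/2$, so that the relation stops being minimal. Sums containing $1$ give $r=1/2$ and the rigid identities (iv)--(xxvi). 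One then rescales, folds angles into $(0,\pi/2)$, and discards non-minimal ones.

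I expect the main obstacle to be Step 3. The bookkeeping is delicate: pairing $\zeta$ with $\bar\zeta$ can merge or cancel terms, so the root-of-unity length and the cosine length differ. Checking minimality and completeness of the list, especially at levels $33$ and $35$, is where errors would creep in. I would first derive the general length formula in terms of the primes dividing $N$, and then verify each level by direct computation in $\mathbb Q(\zeta_N)$ to confirm completeness.
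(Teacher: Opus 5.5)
The paper gives no proof of this statement. It is quoted from Monaghan's thesis, which extends the Conway--Jones method you are reconstructing, so your overall route is the right one. However, Step~2 contains a genuine error. The inequality $\sum_i(p_i-2)\le \ell-2$ with $\ell\le 13$ does not leave only $15,21,33,35$ as composite levels. For $N=105$ it gives $1+3+5=9\le 11$, so level $105$ survives the count. Nor is that level empty. Subtracting (ix) from (vi) gives the six-term relation
\[
\cos\tfrac{\pi}{5}-\cos\tfrac{\pi}{15}+\cos\tfrac{4\pi}{15}-\cos\tfrac{\pi}{7}+\cos\tfrac{2\pi}{7}-\cos\tfrac{3\pi}{7}=0 ,
\]
whose constants $\pm\tfrac12$ have cancelled. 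Its twelve roots of unity form a conjugation-invariant vanishing sum of level $105$, once signs are absorbed into the coefficients. This sum is \emph{minimal}. Split it over the cosets $\zeta_7^j\mu_{15}$: six cosets contain the single term $\zeta_7^j$, and the trivial coset contains six level-$15$ terms with total $1$. One checks that these six terms have no nonempty subset of value $0$ and no proper subset of value $1$, so no proper subsum of the whole sum vanishes. The relation is absent from the list only because a proper subset of its cosines has rational value.

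This also shows that your translation of minimality works in one direction only. If the cosine relation is minimal, then $S=\sum a_j(\zeta_j+\bar\zeta_j)-2r$ is either a minimal vanishing sum, or $r=0$ and $S=T+\bar T$ with $T$ minimal of length at most $6$. The second case is where (i)--(iii) come from, via $R_3$, $R_5$ and $(R_5:R_3)$. The converse fails, because the constant term can absorb a rational-valued sub-relation. So levels cannot be pruned by length counting alone. To close the gap, Step~3 must enumerate level $105$ too. At every level it must discard sums having a proper conjugation-invariant subset of rational value, not merely those with a vanishing subsum. There is also a smaller slip in Step~1. The coset representatives should be $\zeta_N^j$ for $0\le j<p$. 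When $p^2\mid N$, elements of order dividing $p$ already lie in $\mathbb{Q}(\zeta_{N/p})$, so they are not independent over it.
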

\medskip
We call a sum \emph{minimal} if it has no proper subsum equal to $0$ or $\pm 1/2$.

\section{Outline of the proof of Theorem \ref{thm}}\label{sec outline}

Our strategy to establish non-discreteness is to show that if $R_1R_2R_3$ is elliptic, then it must have infinite order; which implies 
the group cannot be discrete. More precisely, suppose that $R_1R_2R_3$ is elliptic. If it has finite order then its eigenvalues 
have the form $e^{i\theta_j}$ where $\theta_1$, $\theta_2$, $\theta_3$ are rational multiples of $\pi$ satisfying 
\eqref{eq-theta-sum} and~\eqref{signature (2,1)}. Using Lemma~\ref{nec}, to prove that $R_1R_2R_3$ has infinite order, 
we first determine whether equation~\eqref{main equality 1} admits solutions  by repeatedly applying Theorem~\ref{de2}. 
If equation~\eqref{main equality 1} admits no such solutions, then $R_1R_2R_3$ necessarily has infinite order. 

To solve equation~\eqref{main equality 1}, we proceed in the following three steps.
\medskip

\emph{Step 1: The case $3\le p_1\le 9$.}
In Section~6 of \cite{lp}, we gave a complete classification of the isometry type of $R_1R_2R_3$ for $3\le p_1\le 9$. Consequently, 
for all groups $\Gamma(p_1,p_2,p_3;p_4)$ in this range, the element $R_1R_2R_3$ is loxodromic, with only finitely many exceptions. 
Among these exceptions are the Thompson groups $\Gamma(3,3,4;7)$, $\Gamma(3,3,5;5)$, 
the Deraux group $\Gamma(4,4,4,5)$ and $\Gamma(p_1,p_1,p_1;p_1)$ which are 
normal subgroups of Livn\'e lattices. The remaining exceptions occur only when $p_1\in\{9,8\}$. The group is not discrete when either
$p_1=8$ or $p_1=p_2=9$, and are non-discrete; see Lemma~\ref{lem p<=9}. 
Therefore, in proving Theorem~\ref{thm}, it suffices to consider $p_1\ge 10$ in the case $p_1=p_2$ and $p_1\ge 9$ in the case 
$p_1<p_2$, significantly reducing the required computation. 
\medskip

\emph{Step 2. The case $10\le p_{1} = p_{2}$.} This part is discussed in Section~\ref{section p1=p2}. In this case, 
equation~\eqref{main equality 1} becomes
    $$
    \cos(\pi-4\phi_{1}) + \cos(2\phi_{3}) + \cos(2\phi_{4}) + \cos\theta_1 + \cos\theta_2 + \cos\theta_3 = 0.
    $$
    According to Theorem \ref{de2}, two possibilities may occur. First, this equation may have no proper vanishing subsums, in which case it must have the form of the sums (of length $6$) listed in Theorem \ref{de2}. The corresponding solutions are then obtained, as discussed in Lemma \ref{lem4}. The second possibility is that there exist proper vanishing subsums. Depending on the lengths of these subsums, there are three cases:
    \begin{enumerate}
        \item[(a)] There exists a subsum of length $1$ equal to $0$ (Lemma \ref{lem cos x=0}).
        \item[(b)] There exists a subsum of length $2$ equal to $0$ (Lemma \ref{lem2}).
        \item[(c)] There exists a subsum of length $3$ equal to $0$ (Lemma \ref{lem3}).
    \end{enumerate}
    After checking all these subcases by repeatedly applying Theorem~\ref{de2} and verifying that 
    equations~\eqref{eq-theta-sum} and~\eqref{main equality 1} admit no solutions, we conclude that, in the case 
    $p_{1}=p_{2}$, if $R_1R_2R_3$ is elliptic, then it must have infinite order.
\medskip

    \emph{Step 3. The case $9\le p_{1} < p_{2}$.} This case is treated in Section~\ref{section p1<p2}, and the equation~\eqref{main equality 1} under consideration is
$$
\cos(2\phi_1+2\phi_2)+\cos(2\phi_1-2\phi_2)-\cos(2\phi_3)-\cos(2\phi_4)-\cos\theta_1-\cos\theta_2-\cos\theta_3=1.
$$
According to Corollary~\ref{de1}, this equation must decompose into several minimal subsums, two of which have value $1/2$, while any additional subsums, if present, have value $0$. By analysing the subsums of value $1/2$ listed in Theorem~\ref{de2} that contain a cosine term of the form $-\cos(2\pi/p)$, we obtain the unique solution, which corresponds to the group $\Gamma(10,30,30;30)$. Finally, we prove that $\Gamma(10,30,30;30)$ is non-discrete by exhibiting an elliptic element of infinite order.

Therefore, we prove Theorem~\ref{thm}.

\medskip

\section{The case $3\le p_1\le 9$}\label{section p1<=9}
In this section, we prove the following lemma.

\begin{lem}\label{lem p<=9}
Suppose that $\Gamma(p_1,p_2,p_3;p_4)$ is a complex hyperbolic triangle group such that $R_1R_2R_3$ is elliptic. If $p_1=8$ or $p_1=p_2=9$, then $\Gamma(p_1,p_2,p_3;p_4)$ is either a normal subgroup of the Livn\'{e} group or non-discrete.
\end{lem}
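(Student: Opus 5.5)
The plan is to rely on the classification in Section~6 of \cite{lp}, which describes the isometry type of $R_1R_2R_3$ for every $\Gamma(p_1,p_2,p_3;p_4)$ with $3\le p_1\le 9$. From that list we extract the finitely many parameter quadruples with $p_1=8$, or with $p_1=p_2=9$, for which $R_1R_2R_3$ is elliptic rather than loxodromic or parabolic. Among them will be $\Gamma(8,8,8;8)$ and $\Gamma(9,9,9;9)$, which are normal subgroups of Livn\'e lattices and fall under the first alternative of the statement. For each remaining quadruple we will show that $R_1R_2R_3$ is elliptic of infinite order. A discrete group cannot contain such an element, so the group is then non-discrete.

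For a fixed exceptional quadruple the numbers $r_j=\cos(\pi/p_j)$ are known. Equations \eqref{alpha and n} and \eqref{123} then determine $\mathrm{tr}(R_1R_2R_3)$ exactly, as an algebraic number. Suppose, for a contradiction, that $R_1R_2R_3$ had finite order. By Lemma~\ref{nec} its eigenvalue angles $\theta_1,\theta_2,\theta_3$ would lie in $\mathbb{Q}\pi\setminus\mathbb{Z}\pi$ and satisfy \eqref{eq-theta-sum}, \eqref{signature (2,1)} and \eqref{main equality 1}. In \eqref{main equality 1} the four terms $\cos(2\phi_1\pm2\phi_2)$, $\cos(2\phi_3)$ and $\cos(2\phi_4)$ are explicit. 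We would therefore normalise all angles to $(0,\pi/2)$ and cancel or combine coincident terms. What remains is a rational relation among at most seven cosines, whose minimal subsums must be among those listed in Theorem~\ref{de2}, or of the form given in Corollary~\ref{de1}. Matching denominators settles the question. The known angles have denominators dividing $4\cdot\mathrm{lcm}(p_j)$. Each $\theta_j$ would then have to pair with one of the short vanishing sums (i)--(iii), or with one of the $1/2$-sums, which involve only the denominators $3,5,7,11,13,15,21,33,35$. We then check that no choice of $\theta_j$ is consistent with \eqref{eq-theta-sum}.

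There is a simpler alternative for each case, which I would try first. Compute the characteristic polynomial of $R_1R_2R_3$, namely $x^3-\tau x^2+\bar\tau x-1$ with $\tau=\mathrm{tr}(R_1R_2R_3)$. Then show that it is not a product of cyclotomic factors. This can be done by computing its Galois conjugates, for example over $\mathbb{Q}(\cos(\pi/8))$ or $\mathbb{Q}(\cos(\pi/9))$ adjoined the remaining $r_j$, and finding a root off the unit circle. It can also be done by locating the angles numerically and showing that they are not rational multiples of $\pi$ with small denominator bounded via the degree. A root off the unit circle in a Galois conjugate immediately rules out finite order.

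The main obstacle is the bookkeeping: getting the exact exceptional list from \cite{lp} for $p_1=8$ and for $p_1=p_2=9$, which may include some infinite ($p_4=\infty$ or large) tails. If any such infinite families appear, the per-case computation must be replaced by a uniform Conway--Jones argument using Theorem~\ref{de2}, in the style of Sections~\ref{section p1=p2} and~\ref{section p1<p2}. I expect that to be the delicate step.
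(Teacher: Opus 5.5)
Your plan is sound, and its skeleton is the paper's: reduce to the finite exceptional list of Lemma~6.3 of \cite{lp}, then exhibit an elliptic element of infinite order. The difference lies in how the list is handled.

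The list has no infinite tails. Apart from the Livn\'e cases it is $\Gamma(8,9,9;9)$, $\Gamma(8,8,8;9)$, $\Gamma(9,9,10;10)$ and $\Gamma(9,9,9;p_4)$ with $p_4\in\{10,11,12\}$. The paper does no computation for the other five, since they are equilateral or have $p_1=p_2$, $p_3=p_4$, and are already known to be non-discrete by \cite{par3,ps}. For the one new group $\Gamma(8,9,9;9)$ it does not use $R_1R_2R_3$ at all. It uses $R_1R_2R_1R_3$, whose trace $\bigl(1+2\cos(2\pi/9)\bigr)\bigl(1+2\cos(2\pi/9)-2\cos(\pi/4)\bigr)$ is real and lies in $(-1,3)$. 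This leaves a single unknown angle, needs one application of Theorem~\ref{de2}, and avoids \eqref{eq-theta-sum} entirely.

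Your uniform route through $R_1R_2R_3$ does work for $\Gamma(8,9,9;9)$, with one caveat. Because $p_2=p_3=p_4$, $R_1R_2R_3$ there has a genuine root-of-unity eigenvalue $e^{3\pi i/4}$ (taking $\alpha=\pi/8$). So only the other two angles $5\pi/8\pm\delta/2$ can be irrational. They are: $\cos(\delta/2)=\sin(\pi/8)\bigl(1+2\cos(2\pi/9)\bigr)$ has the Galois conjugate $\cos(\pi/8)\bigl(1+2\cos(4\pi/9)\bigr)>1$, which is exactly your Galois test.

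Your Conway--Jones version also goes through. Every vanishing sum (i)--(iii) has its normalised angles in a single class $\pm\varphi+\frac{\pi}{15}\mathbb{Z}$. The terms $\cos(\pi/36)$, $\cos(17\pi/36)$ and $-2\cos(2\pi/9)$ in \eqref{main equality 1} lie in three different such classes, and none of them appears in any sum of value $1/2$. Cancelling them therefore needs at least one, one and two of the $\theta_j$ respectively, but there are only three $\theta_j$.

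Your approach gives uniformity and the stronger conclusion that $R_1R_2R_3$ itself has infinite order, in line with the strategy of Section~\ref{sec outline}. The price is redoing the cases already settled in \cite{par3,ps}. The paper's choice of an element with real trace is what keeps its proof to a few lines.
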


\begin{proof}
By Lemma~6.3 of \cite{lp}, any such group that is not a normal subgroup of a Livn\'{e} group must be one of the following:
\[
\Gamma(8,9,9;9),\qquad
\Gamma(8,8,8;9),\qquad
\Gamma(9,9,10;10),\qquad
\Gamma(9,9,9;p_4),
\]
where $p_4\in\{10,11,12\}$. The non-discreteness of all these groups except $\Gamma(8,9,9;9)$ follows from \cite{par3,ps}.

For the group $\Gamma(8,9,9;9)$, a direct computation gives
\begin{eqnarray*}
{\rm tr}(R_1R_2R_1R_3) 
& = & \bigl(1+2\cos(2\pi/9)\bigr)\bigl(1+2\cos(2\pi/9)-2\cos(\pi/4)\bigr) \\
& = & 2.8305\ldots\in(-1,3),
\end{eqnarray*}
so this element is elliptic. We then consider the equation
$$
{\rm tr}(R_1R_2R_1R_3)=1+2\cos(\phi),
$$
which is equivalent to
$$
1
=
\cos\left(\frac{\pi}{4}\right)
+\cos\left(\frac{\pi}{36}\right)
+\cos\left(\frac{17\pi}{36}\right)
-2\cos\left(\frac{2\pi}{9}\right)
-\cos\left(\frac{4\pi}{9}\right)
+\cos\phi.
$$

If $\phi=k\pi/2$ for some $k\in\mathbb{Z}$, then $\cos\phi\in\{0,\pm1\}$. It is clear that, in this case, the equation above does not hold. 
If $\phi\in\mathbb{Q}\pi$ but $\phi$ is not covered by the previous case, then Theorem~\ref{de2} implies that the right-hand side must 
decompose into two sums, each equal to $1/2$ and each containing the term $-\cos(2\pi/9)$. Checking all possible decompositions of 
the terms not involving $\cos\phi$, we find that none of them has value $1/2$. Therefore, the equation admits no solution with 
$\phi/\pi\in\mathbb{Q}$. It follows that $R_1R_2R_1R_3$ has infinite order, and hence $\Gamma(8,9,9;9)$ is non-discrete.
\end{proof}

\medskip

Therefore, by Theorem~1.4 of \cite{lp}, it suffices, in proving Theorem~\ref{thm}, to consider only the cases described in Section~\ref{sec outline}.
\medskip

\section{The case $p_1 = p_2$}\label{section p1=p2}

We suppose that $10\le p_1=p_2\le p_3\le p_4\le \infty$. Then equation~\eqref{main equality 1} becomes
\begin{equation}\label{eq-5.1}
        \cos(\pi-4\phi_{1}) + \cos(2\phi_{3}) + \cos(2\phi_{4}) + \cos\theta_1 + \cos\theta_2 + \cos\theta_3 = 0,
\end{equation}
where $\phi_j=\pi/p_j$. As explained in Sections~\ref{sec basic on triangle groups} and~\ref{sec outline}, to prove that if the element 
$R_1R_2R_3$ in the group $\Gamma(p_1, p_2, p_3; p_4)$ is elliptic with infinite order, it suffices to show that \eqref{eq-5.1}
has no solutions for $p_1, p_3, p_4\in \mathbb Z^+$ and $\theta_j\in \mathbb Q\pi$, where $j=1,\,2,\,3$, satisfting
\eqref{eq-theta-sum} and \eqref{signature (2,1)}.
Additionally, for the sake of convenience, we define
\begin{equation}\label{eq-T}
    T_1=\left\{\pi-4\phi_1\right\}, \qquad
    T_2=\left\{2\phi_3, 2\phi_4\right\}, \qquad
    T_3=\left\{\theta_1, \theta_2, \theta_3\right\}.
\end{equation}

According to Lemma~\ref{lem p<=9}, it suffices to restrict our attention to the case \( p_1 \ge 10 \).
The cases $p_1=p_3$, $p_3=p_4$, and either $p_3=\infty$ or $p_4=\infty$ have been treated in Parker \cite{par3}, 
Parker and Sun \cite{ps}, and Pratoussevitch \cite{pra2}, respectively. Furthermore, the case 
$p_3=2p_1$, and so by symmetry $p_4=2p_1$, was treated in Monaghan \cite{mon}. Thus, we may further assume that 
$10\le p_1<p_3<p_4<\infty$ and $p_3,\,p_4\neq 2p_1$. 

The main result of this section is stated as follows.

\begin{prop}\label{main prop 1}
    Suppose that a complex hyperbolic $(p_1, p_1, p_3; p_4)$-triangle group satisfies $10\le p_1 < p_3< p_4<\infty$ and 
    $p_3,\,p_4\neq 2p_1$. If the element $R_1R_2R_3$ is elliptic then the group is not discrete.
\end{prop}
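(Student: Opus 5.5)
We argue by contradiction, following the strategy of Section~\ref{sec outline}. Suppose the group is discrete and $R_1R_2R_3$ is elliptic. Then $R_1R_2R_3$ has finite order. By Lemma~\ref{nec}, its eigenvalue angles satisfy $\theta_j\in\mathbb{Q}\pi\setminus\mathbb{Z}\pi$, together with \eqref{eq-theta-sum}, \eqref{signature (2,1)} and equation \eqref{eq-5.1}. So it suffices to show that \eqref{eq-5.1} has no such rational solution under the hypotheses $10\le p_1<p_3<p_4<\infty$ and $p_3,p_4\ne 2p_1$.

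First I normalise the six angles in $T_1\cup T_2\cup T_3$ from \eqref{eq-T}. Each $\cos\theta_j$ is replaced by $\pm\cos\theta_j'$ with $\theta_j'\in[0,\pi/2]$, and $\cos(\pi-4\phi_1)=-\cos(4\phi_1)$. Since $p_1\ge 10$, the angles $4\phi_1$, $2\phi_3$, $2\phi_4$ all lie in $(0,\pi/2)$. They are pairwise distinct because $p_3\ne p_4$ and $p_3,p_4\ne 2p_1$; this is exactly why those cases are excluded.

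Next I split into cases according to whether \eqref{eq-5.1}, a vanishing sum of at most six cosines, has a proper vanishing subsum.

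\emph{No proper vanishing subsum.} Then the sum is minimal, and after merging coincident terms Theorem~\ref{de2} forces it to be proportional to one of (i)--(iii). Any sum of type (iv)--(xxvi) has value $1/2\neq0$, so those are impossible. Matching coefficients, (i) and (ii) have all coefficients of one sign while $-\cos4\phi_1$ carries the opposite sign to $\cos2\phi_3$ and $\cos2\phi_4$, so at least one $\theta_j$ must realise each sign pattern. I then read off candidate values of $p_1,p_3,p_4$ from the finitely many shifts $\varphi+2k\pi/3$ and $\varphi+2k\pi/5$, and test each candidate against $\theta_1+\theta_2+\theta_3\in 2\pi\mathbb{Z}$ and the inequality \eqref{signature (2,1)}.

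\emph{Proper vanishing subsum.} The complementary subsum also vanishes, so the six terms split into vanishing subsums of lengths $1+5$, $2+4$ or $3+3$.
\begin{itemize}
\item A length-one subsum forces some angle to equal $\pi/2$. This cannot be $4\phi_1$, $2\phi_3$ or $2\phi_4$, so it is some $\theta_j=\pm\pi/2$. Equation \eqref{eq-theta-sum} then ties the remaining two $\theta$'s, and I recurse on the length-five sum.
\item A length-two subsum means $\cos x=-\cos y$, i.e.\ $x+y\equiv\pi$. Either two $\theta$'s pair up, which by \eqref{eq-theta-sum} pins down the third, or a $\theta$ pairs with a $\phi$-term, which determines that $\theta$ in terms of $p_1,p_3,p_4$. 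The remaining length-four sum must itself split again or be of type (i) plus a single term, which is impossible by length.
\item A length-three subsum must be of type (i) or split further.
\end{itemize}
In each subcase the constraints give either explicit small values of $p_j$, which are excluded by $p_1\ge10$ and the distinctness conditions, or values violating \eqref{signature (2,1)}.

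The main obstacle is the bookkeeping in the $3+3$ and $2+4$ cases. Here the $\theta$'s can mix with the $\phi$-terms in many sign patterns, and the relation $\theta_1+\theta_2+\theta_3=2k\pi$ must be used carefully to get a contradiction. I would organise these cases by how many $\theta$'s lie in each subsum. When a subsum is of the form $\cos\varphi+\cos(\varphi+2\pi/3)+\cos(\varphi+4\pi/3)$, I would use that its three angles differ by multiples of $2\pi/3$, and then check compatibility with $4\phi_1=4\pi/p_1<2\pi/5$ and $2\phi_3,2\phi_4<\pi/5$. These inequalities should show that two $\phi$-terms cannot lie in a common type (i) or (ii) sum unless $p_3$ or $p_4$ equals $2p_1$ or $p_1$ is small.
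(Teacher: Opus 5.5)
\textbf{There is a genuine gap in how you apply Theorem~\ref{de2}.} Your overall route matches the paper's: reduce to rational solutions of \eqref{eq-5.1}, split according to vanishing subsums of length $1$, $2$, $3$ or none, and use Theorem~\ref{de2} with \eqref{eq-theta-sum} and \eqref{signature (2,1)}. The gap is in the minimality hypothesis. Theorem~\ref{de2} needs that no proper subset of the angles has a rational combination equal to a rational number. In the paper's terms, no proper subsum may equal $0$ or $\pm\tfrac12$, not merely $0$.

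A vanishing six-term sum with no vanishing proper subsum can still split as a subsum of value $\tfrac12$ plus a subsum of value $-\tfrac12$. So your deduction ``any sum of type (iv)--(xxvi) has value $\tfrac12\neq 0$, so those are impossible'' is invalid, and this case really occurs:
\begin{itemize}
\item For $p_1=12$ the term $\cos(\pi-4\phi_1)=\cos(2\pi/3)=-\tfrac12$ is itself rational.
\item Likewise $\cos\theta_j=\pm\tfrac12$ whenever $\theta_j\equiv\pm\pi/3$ or $\pm2\pi/3$.
\end{itemize}
The other five terms must then sum to $\pm\tfrac12$, which brings in the length-five sums (xiii), (xiv), (xv), (xvii).

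The paper handles this at the end of Lemma~\ref{lem4}, and (xiii) produces a real candidate: $(p_1,p_3,p_4)=(12,14,21)$ with $(\theta_1,\theta_2,\theta_3)=(\pm20\pi/21,\pm16\pi/21,\pm8\pi/21)$. Here $\cos\theta_1+\cos\theta_2+\cos\theta_3\approx-1.357<-1$, so \eqref{signature (2,1)} holds, and only \eqref{eq-theta-sum} eliminates it. Your scheme never produces this case.

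The same omission recurs in your sub-cases, where you say ``a length-three subsum must be of type (i) or split further''. In Lemma~\ref{lem3} the possibility $\cos\theta_1=-\tfrac12$ leads to $\cos(2\pi/p_3)-\cos(4\pi/p_1)=\tfrac12$, which must be checked against (v). To repair the argument, decompose at every stage into subsums minimal in this stronger sense, allow pairs of subsums with values $\pm\tfrac12$, and run through (iv)--(xxvi) as well as (i)--(iii).

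\textbf{Two smaller misreadings.}
\begin{itemize}
\item The hypothesis $p_3,p_4\neq 2p_1$ is not what makes $4\phi_1\neq 2\phi_j$; equality would need $p_j=p_1/2<p_1$. It is there because pairs such as $(p_1,p_j)=(15,30)$ and $(25,50)$ genuinely fit the $S_1$/$S_2$ patterns of Lemma~\ref{tech lemma 1}. The paper delegates those to Monaghan.
\item Your sign argument for (i) and (ii) compares the normalised term $-\cos 4\phi_1$ with the unnormalised forms of (i) and (ii). After normalising to $(0,\pi/2)$, the terms of (i) need not all have one sign. The paper avoids this by keeping $\pi-4\phi_1\in(\pi/2,\pi)$ unnormalised and comparing differences of angles with multiples of $2\pi/3$ and $2\pi/5$.
\end{itemize}
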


We prove this result in Section~\ref{sec 5.2} and first present several technical lemmas in Section~\ref{sec 5.1}.

\medskip

\subsection{Technical lemmas}\label{sec 5.1}

In \cite{lp} we find all $\Gamma(p_1,p_2,p_3;p_4)$ in which $R_1R_2R_3$ is elliptic. 
In particular, the following result follows directly from Table~1 of \cite{lp}.

\begin{prop}\label{prop-lp}
Suppose that $10\le p_2\le 13$ and $p_1=p_2\le p_3\le p_4$. If the element $R_1R_2R_3$ of the group $\Gamma(p_1,p_1,p_3;p_4)$ is elliptic then:
\begin{enumerate}
    \item[(a)] If $p_1=p_2=10$ then $p_3\le 13$ and $p_4\le 20$;
    \item[(b)] If $p_1=p_2=11$ then $p_3\le 17$ and $p_4\le 41$; 
    \item[(c)] If $p_1=p_2=12$ then $p_3\le 24$ and $p_4\le 65$;
    \item[(d)] If $p_1=p_2=13$ then $p_3\le 56$ and $p_4\le 184$.
\end{enumerate}    
\end{prop}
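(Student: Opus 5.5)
The statement says it "follows directly from Table~1 of \cite{lp}". An independent derivation would rest on the ellipticity criterion for $R_1R_2R_3$ in terms of its trace. By \eqref{123}, $\tau={\rm tr}(R_1R_2R_3)=8r_1^2r_3e^{i\alpha}-4(2r_1^2+r_3^2)+3$ when $p_1=p_2$. Here $\cos\alpha$ is fixed by \eqref{alpha and n}, which gives $\cos\alpha=(4r_1^4+r_3^2-r_4^2)/(4r_1^2r_3)$. The element is regular elliptic exactly when Goldman's discriminant $f(\tau)=|\tau|^4-8{\rm Re}(\tau^3)+18|\tau|^2-27$ is negative. So the plan is to fix $p_1\in\{10,11,12,13\}$ and study $f$ as a function of $(r_3,r_4)$.

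\emph{Step 1: monotonicity in $p_4$.} With $p_1,p_3$ fixed, increasing $p_4$ increases $r_4$. This decreases $\cos\alpha$, so $\tau$ moves along the fixed circle of radius $8r_1^2r_3$ centred at $3-4(2r_1^2+r_3^2)$, with $|\alpha|$ increasing towards $\pi$. I would show that along this arc $f$ changes sign at most once, from negative to positive. Geometrically, the arc leaves the deltoid of elliptic traces and does not re-enter it; the expected picture is that $\alpha\to\pi$ pushes $\tau$ towards the loxodromic region (compare $W_B$ becoming loxodromic as $W_A$ goes from elliptic to parabolic). Once this holds, for each $(p_1,p_3)$ the elliptic $p_4$ form an initial interval. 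Its right endpoint is found by evaluating $f$ at $r_4=\cos(\pi/p_4)$, and in the limit $r_4=1$ (the case $p_4=\infty$).

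\emph{Step 2: a bound on $p_3$.} Since $p_4\ge p_3$, the most favourable choice is $p_4=p_3$. If $f>0$ at $r_4=r_3$, then by Step~1 no admissible $p_4$ gives an elliptic element. Putting $r_4=r_3$, $\cos\alpha$ becomes $r_1^2/r_3$ and $f$ becomes an explicit function of $r_3$. I would check it is positive for all $p_3$ beyond the stated thresholds $13,17,24,56$, using its behaviour as $r_3\to1$ plus a monotonicity or derivative estimate.

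\emph{Step 3: a bound on $p_4$.} For each of the finitely many remaining $p_3$, find the last $p_4$ with $f<0$ and take the maximum over $p_3$. This should give $20,41,65,184$. This step is a finite numerical check, which is essentially what Table~1 of \cite{lp} records. The inequality \eqref{alpha} (equivalently \eqref{signature (2,1)}) must also be imposed so the group exists.

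The main obstacle is the rigorous monotonicity in Steps~1 and 2. $f$ is a quartic in $\tau$, and showing a single sign change along the arc may need care near the cusps of the deltoid. If a clean argument fails, I would instead bound $f$ from below by a simpler expression valid for $r_3,r_4$ close to $1$, and verify the finitely many remaining cases numerically.
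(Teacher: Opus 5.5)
\emph{Verdict: the set-up is sound, but Step~1 as you justify it is false, and Steps~1--2 are not established without a missing argument.}

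The paper gives no argument here beyond reading the bounds off Table~1 of \cite{lp}. Your set-up is a natural way to reconstruct that table: the circle $|\tau-c|=R$ with $c=3-4(2r_1^2+r_3^2)$ and $R=8r_1^2r_3$, together with Goldman's $f$. The existence condition \eqref{alpha} is indeed automatic once $r_4\ge r_3$. The gap is Step~1, on which Step~2 also rests: the picture that the arc leaves the deltoid and never re-enters it is false.

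For $\mathrm{Re}\,\tau<-1$, the elliptic traces in the upper half-plane form a thin horn. It is bounded by the segment from $-1$ to $-1+2i$ and by the two deltoid edges meeting at the cusp $3\omega$. Parametrise these edges as $2e^{it}+e^{-2it}$ with $u=\cos t\in[-1,0]$. Then $|\tau(t)-c|^2-R^2=g(u)=16u^3-4cu^2-(12+4c)u+5+2c+c^2-R^2$, and $g'(u)=(2u+1)(24u-12-4c)$. So on the lower edge ($-1\le u\le -1/2$), $g$ has an interior maximum at $u^*=(3+c)/6$.

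When $p_3=p_1$, at $t^*=\pi-2\phi_1$ one has $\tau(t^*)-c=e^{-2i\phi_1}(1+e^{2i\phi_1})^3=8r_1^3e^{i\phi_1}$. Hence the circle is tangent to the lower edge exactly at the point $p_4=p_1$. There $R_1R_2R_3$ is a complex reflection with $f=0$, a case your criterion must also allow.

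In the cases I computed with $p_3>p_1$, the circle cuts the lower edge twice. For $(p_1,p_3)=(10,11)$, one finds $g(-1)\approx-0.34$, $g(u^*)\approx 5.6\times10^{-3}$ and $g(-1/2)\approx-0.26$. Thus along the arc $f$ has sign pattern $-,+,-,+$: a loxodromic window, then re-entry into the horn, and only then the final exit through the upper edge.

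What your argument actually needs is that this window always lies in the part of the arc with $r_4<r_3$. In the cases I checked it does. For $(10,11)$ the window ends at height about $2.19$, while the point $p_4=p_3$ sits at height about $2.32$, and height increases along the relevant part of the arc. So your conclusions may well be true.

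However, this is precisely the statement that has to be proved, and it is a second-order comparison. At $r_3=r_1$, three curves are tangent at the same point: the circle, the lower edge, and the curve traced by $\tau$ along $r_4=r_3$ as $r_3$ varies. Without this argument, a positive value of $f$ at $r_4=r_3$ might come from the window and would not exclude elliptic traces at larger $p_4$. So neither the ``initial interval'' claim nor the reduction to $p_4=p_3$ in Step~2 is justified.

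Your fallback avoids the monotonicity issue: a lower bound for $f$ near $r_3,r_4\to 1$, followed by a finite check. But the margins are small: for $p_1=13$, the value of $f$ at $r_3=r_4=1$ is only about $0.03$. The estimates there must therefore be certified, not sampled.
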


We will frequently use equation~\eqref{signature (2,1)}, which states
$$
\mathrm{Re}\bigl(\mathrm{tr}(R_1R_2R_3)\bigr)=\cos\theta_1+\cos\theta_2+\cos\theta_3<-1,
$$
In particular, using equation~\eqref{eq-5.1}, we require that
\begin{equation}\label{eq-necessary}
\cos(\pi-4\phi_{1}) + \cos(2\phi_{3}) + \cos(2\phi_{4}) = -\cos\theta_1 - \cos\theta_2 - \cos\theta_3 >1.   
\end{equation}

From this, the following result is obtained immediately.

\begin{lem}\label{lem-subsum-length-2}
Let $T_1$, $T_2$, $T_3$ be as in equation \eqref{eq-T}. The following statements hold:
\begin{enumerate}
    \item[(a)] If $a,\,b\in (T_1\cup T_2)$ or $a,\,b\in T_3$, then the subsum $\cos a+\cos b$ of \eqref{eq-5.1} does not vanish.
    \item[(b)] If $a\in T_1$ and $b, c\in T_j$ for $j=2, 3$, then the subsum $\cos a+\cos b+\cos c$ of \eqref{eq-5.1} does not vanish.
\end{enumerate}
\end{lem}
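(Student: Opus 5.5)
The plan is to derive every part directly from equation~\eqref{eq-5.1} together with the strict inequality~\eqref{eq-necessary}. The latter says that the three terms indexed by $T_1\cup T_2$ have sum strictly greater than $1$, and the three terms indexed by $T_3$ have sum strictly less than $-1$. Each alleged vanishing subsum then forces the leftover terms of one of these two groups to violate the inequality. We do not need Theorem~\ref{de2} here.

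For (a), first suppose $a,b\in T_1\cup T_2$ and $\cos a+\cos b=0$. Then $\cos(\pi-4\phi_1)+\cos(2\phi_3)+\cos(2\phi_4)$ reduces to the single remaining cosine. That cosine is at most $1$, which contradicts~\eqref{eq-necessary}. Next suppose $a,b\in T_3$, say $a=\theta_i$ and $b=\theta_j$, with $\cos\theta_i+\cos\theta_j=0$. Then $\cos\theta_1+\cos\theta_2+\cos\theta_3=\cos\theta_k\ge -1$, where $\theta_k$ is the remaining angle. This contradicts~\eqref{signature (2,1)}.

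For (b), take $a=\pi-4\phi_1$. If $b,c\in T_2$, then the vanishing subsum is the whole sum $\cos(\pi-4\phi_1)+\cos(2\phi_3)+\cos(2\phi_4)$. This contradicts the fact that this sum exceeds $1$.

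If instead $b,c\in T_3$, the relation $\cos a+\cos b+\cos c=0$ together with~\eqref{eq-5.1} gives
\[
\cos(2\phi_3)+\cos(2\phi_4)+\cos\theta_k=0,
\]
where $\theta_k$ is the remaining eigenvalue angle. Here we use the standing assumption $10\le p_1<p_3<p_4$. It gives $0<2\phi_4<2\phi_3<\pi/5$, so
\[
\cos(2\phi_3)+\cos(2\phi_4)>2\cos(\pi/5)>1.
\]
Hence $\cos\theta_k<-1$, which is impossible.

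An alternative route for this last case is also available. Since $4\phi_1\le 2\pi/5$, we have $\cos a=-\cos(4\phi_1)<0$, so $\cos b+\cos c>0$. Then $\cos\theta_k<-1-(\cos b+\cos c)<-1$, again a contradiction.

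The only step needing any care is this final case, where the mixed subsum combines one term from $T_1$ with two from $T_3$. It is the only place where the explicit bound $p_1\ge 10$ enters, through the positivity of the cosines of the small angles $2\phi_3$ and $2\phi_4$ (or the sign of $\cos(\pi-4\phi_1)$). Everything else follows from the trivial bounds $-1\le\cos x\le 1$.
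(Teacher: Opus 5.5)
Your proof is correct and essentially the same as the paper's. Part (a) and the $b,c\in T_2$ case of (b) are exactly the paper's contradiction with \eqref{eq-necessary}, and your ``alternative route'' for the case $a\in T_1$, $b,c\in T_3$ (using $\cos(\pi-4\phi_1)<0$) is precisely the paper's argument. Your primary argument for that case is a valid minor variant: it uses the complementary vanishing triple $\cos(2\phi_3)+\cos(2\phi_4)+\cos\theta_k=0$ together with $\cos(2\phi_j)>\cos(\pi/5)$, and needs only $p_3,p_4>6$ rather than the sign condition on $\cos(\pi-4\phi_1)$.
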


\begin{proof}
In both cases described in part~(a), we see that equation~\eqref{eq-necessary} does not hold.
For part (b) it is clear that if $b,c\in T_2$ then
$$
0=\cos(a)+\cos(b)+\cos(c)=\cos(\pi-4\phi_1)+\cos(2\phi_3)+\cos(2\phi_4),
$$
which again contradicts \eqref{eq-necessary}.
Finally, suppose $b,\,c\in T_3$. Since $p_1\ge 10$ we see that 
$\pi-4\phi_1=\pi-4\pi/p_1\ge 3\pi/5$ and so $\cos(a)=\cos(\pi-4\phi_1)<0$. 
For $d\in T_3-\{b,c\}$, if $\cos(a)+\cos(b)+\cos(c)=0$ then we have
$$
1<-\cos(b)-\cos(c)-\cos(d)=\cos(a)-\cos(d)<-\cos(d).
$$
This is again a contradiction.
\end{proof}

\medskip

We now find possible values of $(p_1,p_j)$, for $j=3,4$, in which 
$\cos(\pi-4\phi_1)+\cos(2\phi_j)$
is a subsum of $S_1$ or $S_2$, which are the sums from Theorem~\ref{de2} (i) and (ii):
\begin{eqnarray}
S_1 & = &  \sum_{k=0}^{2} \cos\left( \varphi + \frac{2k\pi}{3} \right), \qquad \varphi \in (0, \pi), \quad \varphi \ne \frac{m\pi}{6}, \label{S1} \\
S_2 & = &  \sum_{k=0}^{4} \cos\left(\varphi + \frac{2k\pi}{5} \right), \qquad  \varphi \in (0, \pi), \quad \varphi \ne \frac{l\pi}{10}.\label{S2}
\end{eqnarray}
We can exclude the case where either $p_3$ or $p_4$ equals $2p_1$ using Theorem~2.2.0.17 of Monaghan \cite{mon} .

\begin{lem}\label{tech lemma 1}
    Suppose that $10\le p_1<p_3<p_4<\infty$ and $p_3,p_4\neq 2p_1$. Let $j\in\{3,\,4\}$. Then
    \begin{enumerate}
        \item[(a)] $2\pi/5<(\pi-4\pi/p_1)\pm 2\pi/p_j<\pi$ and $0<2\pi/p_3\pm 2\pi/p_4<2\pi/5$. 
        \item[(b)] If $\cos(\pi-4\pi/p_1)+\cos(2\pi/p_j)$
        is a subsum of $\pm S_1$ then $(p_1,p_j)$ is one of the pairs
        $$
        (10, 30),\ (11, 66),\ (13, 78),\ (14, 42)
        ,\ (16, 24).
        $$
\item[(c)] If $\cos(\pi-4\pi/p_1)+\cos(2\pi/p_j)$ 
is a subsum of $\pm S_2$ then $(p_1,p_j)$ is one of the pairs 
$$
\begin{array}{lllll}
(12,15),\,&  
(16,40),\,& (18,90),\, &(19,190),\\ 
(21,210),\,& (22,110),\,& 
(24,60),\,&(28,35).
\end{array}
$$
\item[(d)] $\cos(2\pi/p_3)+\cos(2\pi/p_4)$ is not a proper subsum of either $\pm S_1$ or 
$\pm S_2$.
    \end{enumerate}
\end{lem}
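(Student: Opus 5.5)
The plan is to reduce every part of Lemma~\ref{tech lemma 1} to a congruence between two angles, and then solve small Diophantine equations in $p_1,p_j$.

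\textbf{Part (a).} This is elementary. Since $p_1\ge 10$ we have $\pi-4\pi/p_1\ge 3\pi/5$. Since $p_j>p_1\ge 10$ we have $p_j\ge 11$, so $2\pi/p_j\le 2\pi/11<\pi/5$. This gives $(\pi-4\pi/p_1)-2\pi/p_j>2\pi/5$. For the upper bound, $p_j>p_1$ gives $2\pi/p_j<4\pi/p_1$, so $(\pi-4\pi/p_1)+2\pi/p_j<\pi$. For the second pair, $p_3<p_4$ gives $2\pi/p_3-2\pi/p_4>0$, and $2\pi/p_3+2\pi/p_4<4\pi/11<2\pi/5$.

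\textbf{The key observation for (b)--(d).} Put $A=\pi-4\pi/p_1$ and $B=2\pi/p_j$, or $A=2\pi/p_3$ and $B=2\pi/p_4$ for (d). Suppose $\cos A+\cos B$ is a subsum of $S_1$. Then $\pm A$ and $\pm B$ are congruent mod $2\pi$ to two distinct angles of the form $\varphi+2k\pi/3$, using that cosine is even. Hence $A+B$ or $A-B$ is congruent to $\pm 2\pi/3$ mod $2\pi$.

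The case $-S_1$ reduces to the same condition. Indeed, $-\cos A-\cos B=\cos(\pi-A)+\cos(\pi-B)$, and $(\pi-A)-(\pi-B)=B-A$ while $(\pi-A)+(\pi-B)\equiv -(A+B)$. So the condition is again $A\pm B\in\{2\pi/3,4\pi/3\}$ mod $2\pi$. In the same way, a subsum of $\pm S_2$ forces $A\pm B\equiv 2\pi/5,\,4\pi/5,\,6\pi/5$ or $8\pi/5$ mod $2\pi$.

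\textbf{Applying the ranges from (a).} In (b) and (c) we have $A\pm B\in(2\pi/5,\pi)$. So the only possibilities are $A\pm B=2\pi/3$ for $S_1$ and $A\pm B=4\pi/5$ for $S_2$. In (d) we have $A\pm B\in(0,2\pi/5)$, where the lower endpoint for the minus sign and the upper endpoint are excluded strictly. Since $A\pm B$ is positive and less than $2\pi/5$, its residue mod $2\pi$ is itself, and none of the target values lies in $(0,2\pi/5)$. This proves (d) at once. In particular, every two-term proper subsum is handled.

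\textbf{Solving the equations.} This is the computational core.

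\emph{For $S_1$.} We get $4/p_1\mp 2/p_j=1/3$, which means $p_j=6p_1/(12-p_1)$ or $p_j=6p_1/(p_1-12)$.
\begin{itemize}
\item The first formula gives $(10,30)$ and $(11,66)$.
\item The second gives $(13,78)$, $(14,42)$, $(15,30)$ and $(16,24)$, after which $p_j\le p_1$ from $p_1=18$ on. The pair $(15,30)$ is discarded because $p_j=2p_1$.
\end{itemize}

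\emph{For $S_2$.} We get $4/p_1\mp 2/p_j=1/5$, which means $p_j=10p_1/(20-p_1)$ or $p_j=10p_1/(p_1-20)$.
\begin{itemize}
\item The first formula gives integer values for $p_1=10,12,15,16,18,19$. We discard $p_1=10$ (since then $p_j=p_1$) and $p_1=15$ (since $p_j=2p_1$).
\item The second formula gives integers for $p_1=21,22,24,25,28,30$. We discard $p_1=25$ ($p_j=2p_1$) and $p_1=30$ ($p_j=p_1$). For $p_1>30$ we have $p_j<p_1$.
\end{itemize}
This leaves exactly the listed pairs.

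\textbf{Expected obstacle.} The main point needing care is the sign and normalisation bookkeeping. The terms of $S_k$ are written as $\cos(\varphi+2k\pi/m)$ with arbitrary real arguments, while Theorem~\ref{de2} normalises angles into $(0,\pi/2)$ with signs absorbed. I would therefore state the congruence criterion carefully, including the $-S_k$ case, before solving. The integrality check itself is routine.
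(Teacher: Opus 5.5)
Your proposal is correct and is essentially the paper's argument. The bounds in (a) force the relevant sum or difference of angles to equal exactly $2\pi/3$ (for $S_1$) or $4\pi/5$ (for $S_2$) and rule out (d) outright; then $p_j=|6p_1/(p_1-12)|$ or $|10p_1/(p_1-20)|$ is checked over the finitely many $p_1$ with $p_j>p_1$, and your lists agree with the paper's. Your reduction of the $-S_k$ case to the $+S_k$ case makes explicit a point the paper leaves implicit, and you could add that $p_1=12$ and $p_1=20$ give $2/p_j=0$ and so contribute nothing.
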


\begin{proof}
We set $j=4$. 
\begin{enumerate}
    \item[(a)] 
We have 
$$
0<2\phi_4=\frac{2\pi}{p_4}<2\phi_3=\frac{2\pi}{p_3}<2\phi_1=\frac{2\pi}{p_1}
\le \frac{\pi}{5}.
$$
Therefore
$$
(\pi-4\phi_1)\pm 2\phi_4\le (\pi-4\phi_1)+2\phi_4<\pi-2\phi_1<\pi.
$$
Similarly
$$
(\pi-4\phi_1)\pm 2\phi_4\ge (\pi-4\phi_1)-2\phi_4 > \pi-6\phi_1\ge \pi-\frac{3\pi}{5}=\frac{2\pi}{5}.
$$
Finally,
$$
0<2\phi_3-2\phi_4<2\phi_3+2\phi_4<4\phi_1\le \frac{2\pi}{5}.
$$

\medskip
\item[(b)] 
If $\cos(\pi-4\phi_1)+\cos(2\phi_4)=\cos(\pi-4\pi/p_1)+\cos(2\pi/p_4)$
is a proper subsum of $S_1$ then $(\pi-4\pi/p_1)$ and $\pm 2\pi/p_4$ differ by an integer multiple of $2\pi/3$. From (a) this multiple of $2\pi/3$ must lie strictly between $2\pi/5$ and $\pi$. Hence, it must be equal to $2\pi/3$. Therefore we must have
\begin{eqnarray*}
    \cos(2\pi/p_4) & = & \cos\bigl((\pi-4\pi/p_1)-2\pi/3\bigr) \\
    & = & \cos\bigl((p_1-12)\pi/3p_1\bigr).
\end{eqnarray*}
This has solutions when
$\bigl|6p_1/(p_1-12)\bigr|$ is an integer $p_4$ bigger than $p_1$. Note that for $p_1>12$ this is equivalent to $6>p_1-12$, which is $p_1<18$. We check each value of $p_1$ between $10$ and $17$. The only solutions with $p_j\neq 2p_1$ are the ones given above.
\medskip
\item[(c)] 
Similarly, if $\cos(\pi-4\phi_1)+\cos(2\phi_4)=\cos(\pi-4\pi/p_1)+\cos(2\pi/p_4)$
is a subsum of $S_2$ then $(\pi-4\pi/p_1)$ and $\pm 2\pi/p_4$ differ by an integer multiple of $2\pi/5$ lying strictly between $2\pi/5$ and $\pi$. Hence, this multiple must be equal to $4\pi/5$. Therefore, we must have
\begin{eqnarray*}
\cos(2\pi/p_4) & = & \cos\bigl((\pi-4\pi/p_1)-4\pi/5\bigr) \\
& = & \cos\bigl((p_1-20)\pi/5p_1\bigr).    
\end{eqnarray*}
This has solutions when $\bigl|10p_1/(p_1-20)\bigr|$ is an integer $p_4$ greater than $p_1$. Note that this is equivalent to 
$10>p_1-20>-10$, which is $10<p_1<30$. 
We check all $p_1$ between $11$ and $29$. The only solutions with $p_j\neq 2p_1$ 
are those given above.
\medskip
\item[(d)] If $\cos(2\phi_3)+\cos(2\phi_4)$ is a proper subsum of $S_1$ or $S_2$, respectively,  then
    $2\phi_3\pm2\phi_4=2\pi/p_3\pm 2\pi/p_4$ is a non-zero integer multiple of $2\pi/3$ or $2\pi/5$, respectively. This gives a contradiction to part (a).
\end{enumerate}  
\end{proof}

\medskip

We now find possible values of the angles which give subsums of $S_3$ from Theorem~\ref{de2} (iii), which we rewrite as:
\begin{equation}\label{(iii)''}
S_3=\sum_{j=1}^{2} \cos\!\left(\varphi+\frac{2j\pi}{3}- \pi\right)+\sum_{k=1}^{4} \cos\!\left(\varphi+\frac{2k\pi}{5}\right),
\end{equation}
where $0<\varphi<\pi$ and $\varphi\neq m\pi/6$, $l\pi/10$ or $s\pi/15$.

\begin{lem}\label{lem-theta-S3}
If $\theta_1+\theta_2+\theta_3=2k\pi$ for $k\in \mathbb Z$ and $\cos(\theta_1)+\cos(\theta_2)+\cos(\theta_3)$
is a subsum of $\pm S_3$ then $\varphi$ is an integer multiple of $\pi/45$. 
\end{lem}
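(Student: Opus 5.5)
The approach is to read off each $\theta_j$ from the angles appearing in $S_3$, up to sign and multiples of $\pi$. We then substitute into the linear relation \eqref{eq-theta-sum}. The crucial feature is that an odd number of copies of $\varphi$ appears, so $\varphi$ can be solved for.

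First we list the angles of the six terms of $S_3$ in \eqref{(iii)''}. They are
\[
\psi=\varphi+c,\qquad c\in\Bigl\{-\frac{\pi}{3},\ \frac{\pi}{3},\ \frac{2\pi}{5},\ \frac{4\pi}{5},\ \frac{6\pi}{5},\ \frac{8\pi}{5}\Bigr\},
\]
since $2\pi/3-\pi=-\pi/3$ and $4\pi/3-\pi=\pi/3$. Every such $c$ is an integer multiple of $\pi/15$. Suppose $\cos\theta_1+\cos\theta_2+\cos\theta_3$ is a subsum of $\pm S_3$, possibly after the overall rescaling allowed in Theorem~\ref{de2}. Then for each $j$ we have $\cos\theta_j=\pm\cos(\varphi+c_j)$ for some $c_j$ in this list. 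We do not even need the $c_j$ to be distinct. Hence
\[
\theta_j=\varepsilon_j(\varphi+c_j)+\delta_j\pi+2m_j\pi,\qquad \varepsilon_j\in\{\pm1\},\ \delta_j\in\{0,1\},\ m_j\in\mathbb Z .
\]
Allowing the extra $\delta_j\pi$ absorbs any sign ambiguity coming from $-S_3$. The offset $\varepsilon_jc_j+\delta_j\pi+2m_j\pi$ is therefore still an integer multiple of $\pi/15$.

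Next we add the three expressions and use $\theta_1+\theta_2+\theta_3=2k\pi$. This gives
\[
(\varepsilon_1+\varepsilon_2+\varepsilon_3)\,\varphi = 2k\pi-\sum_{j=1}^3\bigl(\varepsilon_jc_j+\delta_j\pi+2m_j\pi\bigr)=\frac{N\pi}{15}
\]
for some $N\in\mathbb Z$. The coefficient $\varepsilon_1+\varepsilon_2+\varepsilon_3$ is a sum of three odd numbers. It is therefore odd, hence nonzero, and lies in $\{\pm1,\pm3\}$. Dividing, we get $\varphi=N\pi/15$ or $\varphi=N\pi/45$. In either case $\varphi$ is an integer multiple of $\pi/45$, as claimed.

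The only point needing care is the bookkeeping of signs. We must make sure that every way a term of $\pm S_3$ can match $\cos\theta_j$ changes $\theta_j$ only by $\pm$ and by multiples of $\pi$. Cosine is even and $\cos(x+\pi)=-\cos x$, so these are all the possibilities. The parity argument showing that the coefficient of $\varphi$ is nonzero is then immediate.
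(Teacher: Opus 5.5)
Your proposal is correct and follows essentially the paper's own argument. Both write each $\theta_j$ as $\pm(\varphi + \text{integer multiple of } \pi/15)$ and sum using $\theta_1+\theta_2+\theta_3=2k\pi$, then observe that the coefficient of $\varphi$ is $\pm1$ or $\pm3$. Your explicit $\delta_j\pi$ shifts and the parity argument for the coefficient being nonzero simply spell out what the paper leaves implicit.
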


\begin{proof}
Note that all the angles are of the form $\pm(\varphi+j\pi/15)$. 
In particular,
$$
2k\pi=\theta_1+\theta_2+\theta_3= n\varphi+\frac{m\pi}{15},
$$
where $n=\pm1$ or $\pm3$, and $m\in \mathbb{Z}$. This implies that $\varphi$ is an integer multiple of $\pi/45$.
\end{proof}

\medskip

\begin{lem}\label{lem-tech-2}
Suppose that $10<p_3<p_4<\infty$ and $\varphi$ is an integer multiple of $\pi/45$.
If $\cos(2\pi/p_3)+\cos(2\pi/p_4)$ is a subsum of $\pm S_3$. Then 
$$
2\pi/p_3\pm2\pi/p_4=\pi/15
$$
and 
$(p_3,p_4)$ is one of
$$
(15,30),\ (18,45),\ (45,90).
$$
\end{lem}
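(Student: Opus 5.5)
The plan is to treat the statement as a pure angle-bookkeeping problem: the six angles of $S_3$ in \eqref{(iii)''} differ pairwise by known multiples of $\pi/15$. Since $-\pi\equiv\pi$, we have $\varphi+2\pi/3-\pi=\varphi-\pi/3$ and $\varphi+4\pi/3-\pi=\varphi+\pi/3$. So, in units of $\pi/15$, the angles of $S_3$ are
$$
\varphi+\tfrac{c\pi}{15},\qquad c\in\{-5,\,5,\,6,\,12,\,18,\,24\}.
$$
Suppose $\cos(2\pi/p_3)$ and $\cos(2\pi/p_4)$ are the cosines of two of these angles, say with offsets $c\neq c'$. Then there are signs with
$$
\varphi+\tfrac{c\pi}{15}\equiv \epsilon\,\tfrac{2\pi}{p_3},\qquad \varphi+\tfrac{c'\pi}{15}\equiv \epsilon'\,\tfrac{2\pi}{p_4}\pmod{2\pi}.
$$
Subtracting eliminates $\varphi$. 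It shows that $2\pi/p_3\pm 2\pi/p_4$, for an appropriate sign, is congruent modulo $2\pi$ to $\pm (c-c')\pi/15$.

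First I will list the pairwise differences $c-c'$ modulo $30$. The absolute differences are $10,11,17,23,29,1,7,13,19,6,12,18,6,12,6$, so up to sign they reduce to the residues $\{1,6,7,10,11,12,13\}$ modulo $30$. Next I will use the bound from the argument of Lemma~\ref{tech lemma 1}(a): since $10<p_3<p_4$, both $2\pi/p_3-2\pi/p_4$ and $2\pi/p_3+2\pi/p_4$ lie strictly between $0$ and $4\pi/10=6\pi/15$. The only residue $\pm d$ modulo $30$ that can be represented by a number in the open interval $(0,6)$ is $1$. This forces $2\pi/p_3\pm2\pi/p_4=\pi/15$, which is the first claim.

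For the list of pairs, the equation $1/p_3\pm1/p_4=1/30$ alone has many solutions, for example $(31,930)$. So I will bring in the hypothesis that $\varphi$ is an integer multiple of $\pi/45$. Then each angle $\varphi+c\pi/15$ is a multiple of $\pi/45$, hence so are $2\pi/p_3$ and $2\pi/p_4$ up to sign and multiples of $2\pi$. Because $0<2\pi/p_j<\pi$, this means $90/p_j$ is an integer, i.e.\ $p_j\mid 90$. With $p_j>10$ this leaves $p_3,p_4\in\{15,18,30,45,90\}$. A direct check of $1/p_3\pm1/p_4=1/30$ with $p_3<p_4$ in this set gives:
$$
\frac1{15}-\frac1{30}=\frac1{30},\qquad \frac1{18}-\frac1{45}=\frac1{30},\qquad \frac1{45}+\frac1{90}=\frac1{30}.
$$
These are exactly $(15,30)$, $(18,45)$ and $(45,90)$. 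No other pair in the set works, for instance because $1/30\pm 1/p_4\neq1/30$.

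The step I expect to need the most care is the residue bookkeeping. I must check the full list of offsets, including the two shifted $S_1$-type terms where the $-\pi$ is absorbed. I must also handle the sign ambiguities: $\pm S_3$, and $\cos x=\cos(-x)$ in both the $p_3$ and $p_4$ terms. The goal is to confirm that no residue other than $\pm1$ lands in $(0,6)$. The exclusions $\varphi\neq m\pi/6,\,l\pi/10,\,s\pi/15$ are not needed for this statement. They could only remove some of the three pairs, which the statement allows since it only asserts the pair is one of these.
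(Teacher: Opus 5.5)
Your proposal is correct and follows the paper's own argument. Both proofs restrict $p_3,p_4$ to the divisors $\{15,18,30,45,90\}$ of $90$ using the $\pi/45$ hypothesis, use the bound $0<2\pi/p_3\pm2\pi/p_4<2\pi/5$ to force the value $\pi/15$, and then finish by inspection. The one difference is how you rule out the other multiples of $\pi/15$: you enumerate all fifteen pairwise offsets modulo $30$, which also disposes of two angles from the same family. The paper instead first uses Lemma~\ref{tech lemma 1}(d) to put one angle in each family, then notes that $10j-6k-15$ is odd and prime to $15$.
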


\begin{proof}
If $\varphi$ is an integer multiple of $\pi/45$ then it is easy to see that this is also
true for all the angles in $S_3$. In particular, if $\cos(2\pi/p_3)+\cos(2\pi/p_4)$ is a subsum of $\pm S_3$ then 
$p_3$ and $p_4$ are both factors of $90$, namely
$$
p_3,p_4\in\{15,18,30,45,90\}
$$

Using Lemma~\ref{tech lemma 1}~(d) we see that one of $\pm 2\pi/p_3$, $\pm2\pi/p_4$ 
has the form $\varphi+2j\pi/3-\pi$ for $j\in\{1,2\}$ and the other has the form
$\varphi+2k\pi/5$ for $k\in\{1,2,3,4\}$. Therefore
\begin{eqnarray*}
\frac{2\pi}{p_3}\pm \frac{2\pi}{p_4}
& = & \pm \left(\varphi+\frac{2j\pi}{3}-\pi-\varphi-\frac{2k\pi}{5}\right) \\
& = & \pm\frac{(10j-6k-15)\pi}{15}.
\end{eqnarray*}
In particular, it is an odd multiple of $\pi/15$. Moreover, since $j$ is coprime to $3$ and
$k$ is coprime to $5$, the numerator is neither a multiple of $3$ nor a multiple of $5$.
Using Lemma~\ref{tech lemma 1}~(a) we see that $0<2\pi/p_3\pm 2\pi/p_4<2\pi/5$. 
Therefore $2\pi/p_3\pm2\pi/p_4=\pi/15$ as claimed.

An inspection of the possible values of $p_3$ and $p_4$ shows that the only solutions are those given above.
\end{proof}

\medskip

\subsection{Proof of Proposition~\ref{main prop 1}}\label{sec 5.2}
We prove Proposition~\ref{main prop 1} by using Theorem~\ref{de2}. The proof is then reduced to showing that if a subsum of equation~\eqref{eq-5.1} vanishes then it has no solutions. We treat the cases separately in Lemmas~\ref{lem cos x=0}, \ref{lem2}, \ref{lem3}, and \ref{lem4}, according to the lengths of the subsums.

We begin with the case where a subsum of length $1$ vanishes. It is clear that the only possibility is $\cos(\theta_j)=0$ for $j=1, 2, 3$. Without loss of generality, we assume that the vanishing cosine is $\cos (\theta_1)$. This implies that $\cos(\theta_j)\ne 0$ for $j=2, 3$, by the condition $\mathrm{Re}\bigl(\mathrm{tr}(R_1R_2R_3)\bigr)<-1$.

\begin{lem}\label{lem cos x=0}
Suppose that $\cos \theta_1 = 0$. Then equation~\eqref{eq-5.1} has no solutions.
\end{lem}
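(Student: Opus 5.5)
The plan is to use $\cos\theta_1=0$ to turn \eqref{eq-5.1} into a five-term vanishing sum, classify its possible decompositions into minimal vanishing subsums using Theorem~\ref{de2}, and rule out each decomposition with the technical lemmas of Section~\ref{sec 5.1} plus a finite check.

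\textbf{Setting up.} Since $\cos\theta_1=0$ we have $\theta_1\equiv\pm\pi/2 \pmod{2\pi}$. By \eqref{eq-theta-sum} this gives $\theta_2+\theta_3\equiv\mp\pi/2 \pmod{2\pi}$. Equation \eqref{eq-5.1} then becomes the five-term vanishing sum
$$
\cos(\pi-4\phi_1)+\cos(2\phi_3)+\cos(2\phi_4)+\cos\theta_2+\cos\theta_3=0 .
$$
All coefficients are $\pm1$, and \eqref{signature (2,1)} now reads $\cos\theta_2+\cos\theta_3<-1$.

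\textbf{No length-one pieces, and which pairs can vanish.} No term of this sum vanishes:
\begin{itemize}
\item $\pi-4\phi_1=\pi/2$ would force $p_1=8$, which is excluded;
\item $2\phi_3,2\phi_4<\pi/2$;
\item $\cos\theta_2,\cos\theta_3\neq0$, as noted before the lemma.
\end{itemize}
So the sum splits into minimal vanishing subsums of lengths $5$ or $2+3$. By Lemma~\ref{lem-subsum-length-2}(a), a vanishing pair must be mixed: one angle from $T_1\cup T_2$ and one $\theta$.

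\textbf{Case of a single minimal sum of length $5$.} By Theorem~\ref{de2} it must be proportional to $S_2$. Then $\cos(2\phi_3)+\cos(2\phi_4)$ is a proper subsum of $\pm S_2$, contradicting Lemma~\ref{tech lemma 1}(d).

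\textbf{Case $2+3$.} The vanishing triple must be $\pm S_1$, and its complement is a mixed pair. Hence the triple consists of two angles from $T_1\cup T_2$ and one $\theta$.
\begin{itemize}
\item If the two angles are $2\phi_3,2\phi_4$, this contradicts Lemma~\ref{tech lemma 1}(d).
\item So the triple is $\{\pi-4\phi_1,\,2\phi_j,\,\theta_i\}$ for some $j\in\{3,4\}$ and $i\in\{2,3\}$.
\end{itemize}
By Lemma~\ref{tech lemma 1}(b), $(p_1,p_j)$ is one of $(10,30),(11,66),(13,78),(14,42),(16,24)$. In each case $\theta_i$ is determined modulo sign and $2\pi$ as the third angle of $S_1$, namely $\pm(\pi-4\phi_1+2\pi/3)$ or the analogous value. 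The remaining pair gives $\cos\theta_{i'}=-\cos(2\phi_{j'})$, so $\theta_{i'}\equiv\pm(\pi-2\pi/p_{j'})$.

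Substituting into $\theta_2+\theta_3\equiv\mp\pi/2 \pmod{2\pi}$ yields, for each of the five pairs and each sign choice, a linear equation in $1/p_{j'}$. I then check that none of these equations has a solution consistent with all of the following:
\begin{itemize}
\item $p_{j'}$ is a positive integer;
\item the ordering $p_1<p_3<p_4$ and $p_{j'}\ne 2p_1$;
\item the bounds of Proposition~\ref{prop-lp} when $p_1\in\{10,11,13\}$;
\item the inequality $\cos\theta_2+\cos\theta_3<-1$.
\end{itemize}

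\textbf{Expected obstacle.} The main work is this last finite verification. I expect most sign choices to give a non-integral or negative $p_{j'}$. The few survivors, for example those with $p_1=10$ or $16$, should be eliminated either by the ordering constraint or by computing $\cos\theta_2+\cos\theta_3$ directly and finding it is at least $-1$.
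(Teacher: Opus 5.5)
Your proposal is correct and is essentially the paper's proof: the same five-term reduction, the length-$5$ case killed by Lemma~\ref{tech lemma 1}(d), and in the $2+3$ case Lemma~\ref{tech lemma 1}(b) pinning down $\theta_i$ and then $2\phi_{j'}$. The paper differs only in detail: it excludes the pair $\{\pi-4\phi_1,\theta\}$ and fixes the signs of $\theta_2,\theta_3$ using the consequence $\cos\theta_2,\cos\theta_3<0$ of the signature inequality, and it uses Proposition~\ref{prop-lp} to drop $p_1=10,11$; you instead apply Lemma~\ref{tech lemma 1}(d) to the complementary triple and enumerate signs.

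The finite check you deferred does close, and it should be written out. The only sign choice that can put $2\pi/p_{j'}$ in $(0,\pi/5)$ is $2\pi/p_{j'}=4\pi/p_1-\pi/6$, i.e.\ $p_{j'}=12p_1/(24-p_1)$. This is an integer only for $p_1=16$, where $p_{j'}=24=p_j$, contradicting $p_3<p_4$.
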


\begin{proof}
Since $\cos(\theta_1)=0$ we have $\theta_1=\pm \pi/2$. Also, using 
$\theta_1+\theta_2+\theta_3=0$ (mod $2\pi$) we have $\theta_3=\mp\pi/2-\theta_2$ 
(mod $2\pi$), $\cos(\theta_3)=\mp\sin(\theta_2)$, and $\cos(\theta_2)=\mp\sin(\theta_3)$. 
Using \eqref{signature (2,1)} this means
$$
\mathrm{Re}\bigl(\mathrm{tr}(R_1R_2R_3)\bigr)=\cos(\theta_2)\mp\sin(\theta_2)=\cos(\theta_3)\mp\sin(\theta_3)<-1
$$
and so, up to adding multiples of $2\pi$, we can assume $\theta_2$, $\theta_3$ are both in the interval $(\pi/2,\pi)$ or both in the interval $(\pi,3\pi/2)$. In particular, $\cos(\theta_2)<0$ and $\cos(\theta_3)<0$. 

Moreover, in this case, equation~\eqref{eq-5.1} becomes
\begin{equation}\label{eq-lem 5.2}
    \cos(\pi-4\phi_1) + \cos(2\phi_3) + \cos(2\phi_4) + \cos (\theta_2) + \cos (\theta_3) = 0,
\end{equation}
and none of the cosine terms vanish.
If this sum has no vanishing proper subsums, then it has no solutions by Lemma~\ref{tech lemma 1}. Hence, by Theorem~\ref{de2}, it suffices to examine the case $\cos a+\cos b=0$.
\begin{enumerate}
    \item[(1)] $a, b\in T_1 \cup T_2$ or $a, b\in T_3$. By Lemma~\ref{lem-subsum-length-2}, there are no solutions in this case.
\medskip
    \item[(2)] $a\in T_1$ and $b\in T_3$, without loss of generality assume $b=\theta_2$. 
    Using $\phi_1=\pi/p_1\le \pi/10$, we have
    $$
    \cos(\theta_2)=-\cos(\pi-4\pi/p_1)=\cos(4\pi/p_1)\ge \cos(2\pi/5)>0.
    $$
    This contradicts the assertion above that $\cos(\theta_2)<0$.

\medskip
    \item[(3)] $a\in T_2$ and $b\in T_3$. Without loss of generality, we assume that $a=2\phi_3$ and $b=\theta_2$. In this case, we have the equations
$$
\qquad\quad
\cos(2\phi_3)+\cos(\theta_2)=0\quad \text{and}\quad
\cos(\pi-4\phi_1) + \cos(2\phi_4) + \cos (\theta_3) = 0.
$$
For the latter equation, by applying sum (i) of Theorem~\ref{de2} 
and Lemma~\ref{tech lemma 1} we see that $2\phi_4=(\pi-4\phi_1)-2\pi/3$. This means
that, writing $\phi_1=\pi/p_1$ and $\phi_4=\pi/p_4$ we have
$$
\theta_3=(\pi-4\pi/p_1)+2\pi/3=5\pi/3-4\pi/p_1 \in(\pi,3\pi/2).
$$
In turn, this means that 
$$
\theta_2=5\pi/2-\theta_3= 5\pi/6+4\pi/p_1 \in(\pi, 3\pi/2).
$$
Using Lemma~\ref{tech lemma 1} we see that we only need to consider
$\phi_1=\pi/p_1$ where $p_1=10$, $11$, $13$, $14$ or $16$. Moreover, using
Proposition~\ref{prop-lp} we can discard the cases $p_1=10$ or $11$ since in these
cases $R_1R_2R_3$ is loxodromic. 
$$
\begin{array}{| l |  l | l | l | l | l |}
\hline
p_1 & 2\phi_1 & 2\phi_4 
& \theta_3 
& \theta_2 
& 2\phi_3 
\\
\hline
13 & 2\pi/13 & \pi/39 & 53\pi/39 & 89\pi/78 & 11\pi/78 \\
14 & \pi/7 & \pi/21 & 29\pi/21 & 47\pi/42 & 5\pi/42 \\
16 & \pi/8 & \pi/12 & 17\pi/12 & 13\pi/12 & \pi/12 \\
\hline
\end{array}
$$
We see that the only case where $\phi_3=\pi/p_3$ with $p_3\in{\mathbb Z}$ is
when $p_1=16$. But in this case, we have $p_3=p_4$ which is a contradiction. 
\end{enumerate}
\end{proof}

\medskip

Next, we consider the case where a subsum of length $2$ vanishes in equation~\eqref{eq-5.1}.

   \begin{lem}\label{lem2}
    Suppose that none of the cosines in equation~\eqref{eq-5.1} vanish, and that there exists a subsum of length $2$ equal to $0$. Then there are no solutions.
\end{lem}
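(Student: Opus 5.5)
We classify the possible vanishing length-$2$ subsums $\cos a+\cos b=0$ of \eqref{eq-5.1} by the sets $T_1,T_2,T_3$ containing $a$ and $b$, in the same way as the proof of Lemma~\ref{lem cos x=0}.

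By Lemma~\ref{lem-subsum-length-2}(a), a vanishing pair cannot have both terms in $T_1\cup T_2$ or both in $T_3$. So every vanishing pair has one term in $T_1\cup T_2$ and the other in $T_3$. Up to relabelling there are two cases.

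\emph{Case (A): $a=\pi-4\phi_1$, $b=\theta_1$.} Then $\cos\theta_1=\cos(4\phi_1)>0$ and the remaining sum is
$$\cos(2\phi_3)+\cos(2\phi_4)+\cos\theta_2+\cos\theta_3=0.$$
Condition \eqref{signature (2,1)} gives $\cos\theta_2+\cos\theta_3<-1-\cos(4\phi_1)$.

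\emph{Case (B): $a=2\phi_3$ (or $2\phi_4$), $b=\theta_1$.} Here $\theta_1\equiv\pm(\pi-2\phi_3)$, and the remaining sum is
$$\cos(\pi-4\phi_1)+\cos(2\phi_4)+\cos\theta_2+\cos\theta_3=0.$$

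In each case we split the remaining length-$4$ sum further using Theorem~\ref{de2}. It is either minimal, in which case it cannot occur, since no listed sum of value $0$ has length $4$. Otherwise it splits as $2+2$ or $1+3$. A length-$1$ piece is excluded because no cosine vanishes.

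\emph{The $2+2$ splits.} The same-set restrictions of Lemma~\ref{lem-subsum-length-2} apply again, so each pair joins a $T_2$ (or $T_1$) term with a $\theta_j$. Thus all three $\theta_j$ are of the form $\pi\pm\psi$ with $\psi\in\{4\phi_1,2\phi_3,2\phi_4\}$. Imposing $\theta_1+\theta_2+\theta_3\equiv 0 \pmod{2\pi}$ gives
$$3\pi+\epsilon_1\psi_1+\epsilon_2\psi_2+\epsilon_3\psi_3\in 2\pi\mathbb Z,\qquad \epsilon_j=\pm1.$$
Since $4\phi_1+2\phi_3+2\phi_4<4\pi/5$, the left side lies strictly between $3\pi-4\pi/5$ and $3\pi+4\pi/5$, which contains no multiple of $2\pi$. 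So there is no solution.

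\emph{The $1+3$ splits.} Here the remaining sum contains a vanishing triple, which must be of type $S_1$ in Theorem~\ref{de2}(i).
\begin{itemize}
\item If the triple contains two of $T_1\cup T_2$, Lemma~\ref{tech lemma 1}(b),(d) leaves only the finite list of pairs $(p_1,p_j)$.
\item If it contains two $\theta$'s and one other term, the leftover single term would have to vanish, which is impossible.
\end{itemize}
So in Case (B) the triple is $\cos(\pi-4\phi_1)+\cos(2\phi_4)+\cos\theta_j$ with $(p_1,p_4)$ from Lemma~\ref{tech lemma 1}(b). The leftover term $\cos\theta_k$ must then vanish, which is again impossible. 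Hence the $1+3$ split is only consistent as a $2+3$ structure of the whole length-$5$ sum, and such structures reduce to the situation of Lemma~\ref{lem cos x=0}(3) once we impose the angle-sum condition and \eqref{signature (2,1)}.

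\emph{Main obstacle: finishing the finite cases.} For the finitely many pairs $p_1\in\{10,11,13,14,16\}$ we compute $\theta$ explicitly and then check:
\begin{itemize}
\item that $\phi_3=\pi/p_3$ is forced to be of the form $\pi/n$;
\item that $p_3\ne p_4$;
\item the ellipticity bounds of Proposition~\ref{prop-lp}.
\end{itemize}
I expect a short table analogous to the one in Lemma~\ref{lem cos x=0}, in which each candidate fails because either $p_3\notin\mathbb Z$ or $p_3=p_4$. The delicate part is keeping track of the sign choices $\theta\equiv\pm(\pi-\psi)$ against the congruence $\theta_1+\theta_2+\theta_3\equiv 0\pmod{2\pi}$.
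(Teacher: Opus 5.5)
Your reduction follows the paper's route. A vanishing pair must join a term of $T_1\cup T_2$ with some $\theta_j$. A $1+3$ split of the leftover length-$4$ sum would force a single vanishing cosine, so only $2+2$ splits remain. Both your Case (A) and Case (B) then end in the same configuration: one $\theta$ is paired with $\pi-4\phi_1$, and the other two with $2\phi_3$ and $2\phi_4$.

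\textbf{The gap} is in the step meant to exclude this configuration. The $\theta$ paired with $\pi-4\phi_1$ satisfies $\cos\theta=\cos(4\phi_1)$, as you wrote yourself in Case (A). So $\theta\equiv\pm4\phi_1$, not $\pi\pm4\phi_1$; only the two $\theta$'s paired with $2\phi_3$ and $2\phi_4$ have the form $\pi\pm\psi$. Hence
\[
\theta_1+\theta_2+\theta_3\equiv \pm4\phi_1\pm2\phi_3\pm2\phi_4 \pmod{2\pi},
\]
which lies within $4\pi/5$ of $2\pi$, not of $3\pi$, and your parity argument yields nothing. No argument that ignores the hypothesis can work here. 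Your argument never uses $p_3,p_4>p_1$, yet for $p_3=p_4=p_1$ the angles $\theta_1=4\phi_1$ and $\theta_2=\theta_3=\pi-2\phi_1$ satisfy \eqref{eq-5.1}, sum to exactly $2\pi$, and satisfy \eqref{signature (2,1)}.

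\textbf{The fix} is the paper's argument. The signed sum $\pm4\phi_1\pm2\phi_3\pm2\phi_4$ has absolute value at most $4\phi_1+2\phi_3+2\phi_4<8\phi_1\le4\pi/5$, so to be a multiple of $2\pi$ it would have to equal $0$. That is impossible because $2\phi_3+2\phi_4<4\phi_1$, and this is exactly where $p_1<p_3<p_4$ is used. Equivalently,
\[
\cos(\theta_2+\theta_3)=\cos(2\phi_3\pm2\phi_4)\ge\cos(2\phi_3+2\phi_4)>\cos(4\phi_1)=\cos\theta_1 ,
\]
which contradicts the angle-sum condition.

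Separately, your paragraph on $1+3$ splits and the planned finite table are superfluous. You had already observed that such a split leaves a single vanishing cosine, which the hypothesis excludes. Your appeal to a ``length-$5$ sum'' and to Lemma~\ref{lem cos x=0}(3) belongs to the case $\cos\theta_1=0$, which is not the present one, so that material should be dropped.
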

  \begin{proof}
  Recall that equation~\eqref{eq-5.1} is
\begin{center}
$\cos(\pi-4\phi_1)+\cos(2\phi_3)+\cos(2\phi_4)+\cos \theta_1+\cos \theta_2+\cos \theta_3=0$.
\end{center}
By the assumptions, we analyze the cases in which $\cos a+\cos b=0$.
\begin{enumerate}
    \item[(1)] $a, b\in T_1\cup T_2$, or $a, b\in T_3$. In this case, there are no solutions by Lemma~\ref{lem-subsum-length-2}.
\medskip
    \item[(2)] $a\in T_1$ and $b\in T_3$. Without loss of generality, assume that $b=\theta_1$. This implies that 
    $$
    \qquad\quad
    \cos\theta_1=\cos(4\phi_1).
    $$
Then equation~\eqref{eq-5.1} becomes
$$
\qquad\quad
\cos(2\phi_3)+\cos(2\phi_4)+\cos \theta_2+\cos \theta_3=0.
$$
Since there is no vanishing minimal subsum of length $4$, if this equation has solutions, then, by Theorem~\ref{de2}, it must contain a vanishing subsum of length $2$ of the form $\cos c+\cos d=0$.

By Lemma~\ref{lem-subsum-length-2}, the only possible case is that $c\in T_2$ and $d\in T_3$, and we may assume that $c=2\phi_3$ and $d=\theta_2$. It follows that
$$
\qquad\quad
\cos\theta_2=-\cos(2\phi_3)\quad\text{and}\quad
\cos\theta_3=-\cos(2\phi_4).
$$
Thus, we have
$$
\qquad\quad
\theta_2=(\pi\pm 2\phi_3)+2k\pi\quad\text{and}\quad
\theta_3=(\pi\pm 2\phi_4)+2s\pi,
$$
where $k,s\in \mathbb Z$.
Hence, under the assumption that $10\le p_1<p_3<p_4<\infty$, we obtain
$$
\qquad\quad
\cos(\theta_2+\theta_3)=\cos(2\phi_3\pm2\phi_4)>\cos(2\phi_3+2\phi_4)>\cos(4\phi_1)=\cos\theta_1.
$$
This contradicts the condition $\theta_1+\theta_2+\theta_3=2k\pi$ for $k\in \mathbb Z$. Thus, there are no solutions.
\medskip
    \item[(3)] $a\in T_2$ and $b\in T_3$. In this case, by equation~\eqref{eq-5.1}, we obtain a vanishing subsum of length $4$, one of whose terms is $\cos(\pi-4\phi_1)$. Arguing as in (2), this necessarily implies that there exists a subsum of length $2$ containing $\cos(\pi-4\phi_1)$ that is equal to $0$. Hence, we are reduced to either case~(1) or~(2).
\end{enumerate}
\end{proof}

  \medskip

Third, we deal with the case where there exists a vanishing subsum of length $3$ in equation~\eqref{eq-5.1}. Since this equation contains 
six cosines, it means the sum of
the other three must vanish as well.

\begin{lem}\label{lem3}
    Suppose that no subsums of length $1$ or $2$ in equation~\eqref{eq-5.1} vanish, and that there exists a subsum of length $3$ equal to $0$. Then there are no solutions.
\end{lem}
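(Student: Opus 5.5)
By Theorem~\ref{de2}, a minimal vanishing sum of three cosines must be proportional to sum~(i). Suppose a subsum of length $3$ of \eqref{eq-5.1} vanishes. Then the complementary subsum of length $3$ vanishes as well. By hypothesis neither triple contains a vanishing subsum of length $1$ or $2$, so both triples are minimal. All coefficients in \eqref{eq-5.1} equal $+1$, so each triple is exactly of the form $S_1$ in \eqref{S1}. Concretely, the three angles of a triple agree up to sign and multiples of $2\pi$ with $\varphi,\ \varphi+2\pi/3,\ \varphi+4\pi/3$ for some $\varphi\in(0,\pi)$ with $\varphi\neq m\pi/6$.

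\emph{Step 1: distribute the angles between the two triples.} Let $a=\pi-4\phi_1$, and consider the triple containing $\cos a$. By Lemma~\ref{lem-subsum-length-2}(b), its other two terms cannot both come from $T_2$ or both from $T_3$. Hence, up to relabelling, the two vanishing triples are
$$
\cos(\pi-4\phi_1)+\cos(2\phi_j)+\cos\theta_1=0,\qquad \cos(2\phi_{j'})+\cos\theta_2+\cos\theta_3=0,
$$
where $\{j,j'\}=\{3,4\}$. The first relation makes $\cos(\pi-4\phi_1)+\cos(2\phi_j)$ a subsum of $S_1$. Lemma~\ref{tech lemma 1}(b) then gives $(p_1,p_j)\in\{(10,30),(11,66),(13,78),(14,42),(16,24)\}$ and $2\phi_j=(\pi-4\phi_1)-2\pi/3$. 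It follows that $\theta_1\equiv\pm(5\pi/3-4\pi/p_1)\pmod{2\pi}$. Proposition~\ref{prop-lp} shows that $R_1R_2R_3$ is loxodromic when $p_1=10$ or $11$, so only $p_1\in\{13,14,16\}$ remain, with $p_j=78,42,24$ respectively. For each of these, $p_j$ together with Proposition~\ref{prop-lp}(d) (for $p_1=13$) and the ordering $p_3<p_4$ constrains $j$ and $p_{j'}$.

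\emph{Step 2: use the second triple together with \eqref{eq-theta-sum}.} From the second relation, $\theta_2$ and $\theta_3$ are congruent modulo $2\pi$ to $\varepsilon_2(2\phi_{j'}+2\pi/3)$ and $\varepsilon_3(2\phi_{j'}-2\pi/3)$, with signs $\varepsilon_2,\varepsilon_3\in\{\pm1\}$. Substituting into $\theta_1+\theta_2+\theta_3\equiv 0\pmod{2\pi}$ and running over the eight sign choices gives, for each fixed $p_1\in\{13,14,16\}$, a linear congruence
$$
\pm\Bigl(\frac{5\pi}{3}-\frac{4\pi}{p_1}\Bigr)+(\varepsilon_2+\varepsilon_3)\frac{2\pi}{p_{j'}}+(\varepsilon_2-\varepsilon_3)\frac{2\pi}{3}\equiv 0 \pmod{2\pi}.
$$
When $\varepsilon_2=-\varepsilon_3$ the term in $p_{j'}$ drops out, and the congruence reduces to a condition on $p_1$ alone; I expect this to fail for $p_1=13,14,16$. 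When $\varepsilon_2=\varepsilon_3$ we get $4\pi/p_{j'}\equiv\pm(5\pi/3-4\pi/p_1)\pmod{2\pi}$. This determines $p_{j'}$, and I expect it either to be non-integral or to violate $p_1<p_3<p_4$, $p_{j'}\neq p_j$, or $p_{j'}\neq 2p_1$.

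\emph{Step 3: close remaining cases.} For any surviving candidate I will check the exclusion $\varphi\neq m\pi/6$ in both triples, since otherwise the triple would contain a shorter vanishing subsum. I will also check the inequality \eqref{signature (2,1)}, which by the two relations reads $\cos(\pi-4\phi_1)+\cos(2\phi_3)+\cos(2\phi_4)>1$. As in Lemma~\ref{lem cos x=0}(3), I will tabulate $2\phi_j$, $\theta_1$ and the forced value of $2\phi_{j'}$ for $p_1=13,14,16$ and verify that no integer $p_{j'}$ works. The main obstacle is the sign bookkeeping in Step~2, that is, ensuring every choice of representatives $\pm\varphi+2k\pi/3$ modulo $2\pi$ is covered. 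I will organise this by writing each triple in the normal form $\{\varphi,\varphi+2\pi/3,\varphi+4\pi/3\}$ up to a global sign, which reduces the enumeration to finitely many explicit rational equations in $1/p_1$ and $1/p_{j'}$.
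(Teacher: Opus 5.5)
\textbf{The gap is in Step~1.} You argue that since neither triple has a vanishing subsum of length $1$ or $2$, each triple is minimal and therefore exactly of the form $S_1$. That implication is false. Minimality in Theorem~\ref{de2} means that no proper subsum has a \emph{rational} value, not merely that none vanishes. For example, $\cos(2\pi/3)+\cos(\pi/5)+\cos(3\pi/5)=0$ has no vanishing proper subsum and is not of the form $S_1$. It has exactly the shape of your first triple with $p_1=p_j=10$ and $\theta_1=2\pi/3$.

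So you have omitted every configuration in which one cosine of a triple equals $\pm\tfrac{1}{2}$:
\begin{itemize}
\item $\cos\theta_1=-\tfrac{1}{2}$ in the first triple, which leaves $\cos(2\pi/p_j)-\cos(4\pi/p_1)=\tfrac{1}{2}$. (The value $+\tfrac{1}{2}$ is impossible because $\cos(2\phi_j)>\cos(4\phi_1)$.)
\item $p_1=12$, where $\cos(\pi-4\phi_1)=-\tfrac{1}{2}$ and the first triple reduces to $\cos(2\phi_j)+\cos\theta_1=\tfrac{1}{2}$. Lemma~\ref{tech lemma 1}(b) presupposes the $S_1$ form; here it would give $2\phi_j=0$, so this case is silently lost.
\item $\cos\theta_2$ or $\cos\theta_3$ equal to $\pm\tfrac{1}{2}$ in the second triple.
\end{itemize}
All of these can be closed with Theorem~\ref{de2}(v). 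It forces, respectively, $p_1=p_j=10$ (or $\cos(2\pi/p_j)=1$ when $p_1=12$), $p_j=10$, and $p_{j'}\in\{5,10\}$. Each contradicts $10\le p_1<p_3<p_4$, but this argument has to be made.

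Relatedly, your Step~3 remark that $\varphi=m\pi/6$ would produce a shorter \emph{vanishing} subsum is inaccurate. For $\varphi=2\pi/3$ the triple gives $-\tfrac{1}{2}-\tfrac{1}{2}+1=0$ with no vanishing proper subsum. That particular case cannot occur here, since no term of \eqref{eq-5.1} can equal $\pm1$.

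\textbf{Once patched, your route works.} In Step~2, the case $\varepsilon_2=-\varepsilon_3$ forces $5\pi/3-4\pi/p_1\equiv\pm2\pi/3$, i.e.\ $p_1=12$, which is excluded. The case $\varepsilon_2=\varepsilon_3$ is impossible: $0<4\pi/p_{j'}<4\pi/p_1$, whereas $\pm(5\pi/3-4\pi/p_1)$ reduces modulo $2\pi$ into $(\pi/2,3\pi/2)$ for $p_1\in\{13,14,16\}$.

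\textbf{The paper sidesteps the difficulty.} It applies sum~(i) only to the second triple $\cos(2\phi_4)+\cos\theta_2+\cos\theta_3=0$. Together with $\theta_1\equiv-(\theta_2+\theta_3)$, this gives $\cos\theta_1\in\{-\tfrac{1}{2},\cos(4\pi/p_4)\}$. These values are then substituted into the first triple without assuming anything about its shape:
\begin{itemize}
\item the value $-\tfrac{1}{2}$ is excluded by Theorem~\ref{de2};
\item the value $\cos(4\pi/p_4)$ is excluded by $\cos(4\pi/p_4)>\cos(4\pi/p_1)-\cos(2\pi/p_3)$.
\end{itemize}
This handles the non-minimal first triples automatically and uses neither Lemma~\ref{tech lemma 1}(b) nor Proposition~\ref{prop-lp}.
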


\begin{proof}
Recall that equation~\eqref{eq-5.1} is
\begin{center}
$\cos(\pi-4\phi_1)+\cos(2\phi_3)+\cos(2\phi_4)+\cos \theta_1+\cos \theta_2+\cos \theta_3=0$.
\end{center}
Using Lemma~\ref{lem-subsum-length-2}, we need only consider the case in which $\cos a+\cos b+\cos c=0$, where $a\in T_1$, $b\in T_2$, and $c\in T_3$. Without loss of generality, assume that $b=2\phi_3$ and $c=\theta_1$. Then, we obtain
$$
\cos(\pi-4\phi_1)+\cos(2\phi_3)+\cos (\theta_1)=0
\quad\text{and}\quad
\cos(2\phi_4)+\cos (\theta_2)+\cos (\theta_3)=0.
$$

Solving the second equation using sum~$\mathrm{(i)}$ of Theorem~\ref{de2}, we get
$$
\theta_2+\theta_3=\pm2\pi/3+2s\pi\quad\text{or} \quad\pm4\pi/p_4+2s\pi, \qquad \text{for } s\in\mathbb Z.
$$
Furthermore, using $\theta_1+\theta_2+\theta_3=2k\pi$ for $k\in \mathbb Z$, we obtain 
$$
\cos\theta_1=\cos(\theta_2+\theta_3)=-1/2 \quad \text{or}\quad
\cos(4\pi/p_4).
$$
If $\cos(\theta_1)=-1/2$, then we obtain the equation
$$
\cos(2\pi/p_3)-\cos(4\pi/p_1)=1/2.
$$
Under the condition $p_3>p_1$, this equation admits no solutions by Theorem~\ref{de2}. If $\cos(\theta_1)=\cos(4\pi/p_4)$, then
\begin{eqnarray*}
    \cos(4\pi/p_4) &>& \cos(4\pi/p_1)\ \, \ge \ \, \cos(4\pi/p_1)-\cos(2\pi/p_3).
\end{eqnarray*}
Hence, there are no solutions to the first equation above.
\end{proof}

\medskip

Finally, we treat the case where there is no proper vanishing subsum in equation~\ref{eq-5.1}.
       
\begin{lem}\label{lem4}
Suppose that equation~\eqref{eq-5.1} admits no vanishing proper subsums. Then there are no solutions that satisfy equation~\eqref{eq-5.1}.
\end{lem}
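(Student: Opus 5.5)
If \eqref{eq-5.1} has no vanishing proper subsum, then by Theorem~\ref{de2} the six cosines form a single minimal vanishing sum of length six. The only vanishing sums in the list of length at most six are (i), (ii) and (iii), of lengths $3$, $5$ and $6$. So, after the normalisations allowed there (replacing an angle by its negative or by $\pi$ minus it and changing sign), the whole left-hand side must be $\pm S_3$ from \eqref{(iii)''}. The plan is to show that no assignment of the six angles $\pi-4\phi_1$, $2\phi_3$, $2\phi_4$, $\theta_1,\theta_2,\theta_3$ to the six slots of $S_3$ is compatible with \eqref{eq-theta-sum}, \eqref{signature (2,1)} and the ordering $10\le p_1<p_3<p_4<\infty$, $p_3,p_4\ne 2p_1$.

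\textbf{Step 1.} Reduce to a finite problem.
\begin{enumerate}
\item[(a)] All angles in $S_3$ are of the form $\pm(\varphi+j\pi/15)$ modulo $2\pi$.
\item[(b)] The three $\theta_j$ occupy three slots of $S_3$. Lemma~\ref{lem-theta-S3} then gives $\varphi\in(\pi/45)\mathbb Z$.
\item[(c)] Hence every angle appearing is an integer multiple of $\pi/45$. In particular $2\pi/p_3$, $2\pi/p_4$ and $\pi-4\pi/p_1$ are such multiples, so $p_3,p_4\mid 90$ and $p_1\mid 180$.
\item[(d)] With $p_1\ge10$ and $p_1<p_3<p_4<\infty$, this leaves $p_1\in\{10,12,15,18,20,30,36,45,60,90,180\}$ and $p_3,p_4\in\{15,18,30,45,90\}$.
\end{enumerate}

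\textbf{Step 2.} Use the internal structure of $S_3$ to cut down further. The two cosines $\cos(2\phi_3),\cos(2\phi_4)$ occupy two slots of $S_3$. Either they share a block, or one lies in the ``$2\pi/3$-block'' and the other in the ``$2\pi/5$-block''.
\begin{enumerate}
\item[(a)] If they share a block, Lemma~\ref{tech lemma 1}(a) shows their angles cannot differ by a nonzero multiple of $2\pi/3$ or $2\pi/5$.
\item[(b)] Otherwise Lemma~\ref{lem-tech-2} forces $(p_3,p_4)\in\{(15,30),(18,45),(45,90)\}$. The pair $(15,30)$ is excluded when $p_1=15$ because $p_4\ne 2p_1$; the other exclusions are similar.
\item[(c)] Applying the same block analysis to $\pi-4\pi/p_1$ against $2\pi/p_3$ and $2\pi/p_4$ uses the bounds of Lemma~\ref{tech lemma 1}(a), parts (b),(c). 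This pins $p_1$ to a few values compatible with these $(p_3,p_4)$.
\item[(d)] Proposition~\ref{prop-lp} removes $p_1=10,12$ unless $p_3,p_4$ are small enough. For example $(18,45)$ and $(45,90)$ are impossible for $p_1=10$.
\end{enumerate}

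\textbf{Step 3.} For each surviving triple $(p_1,p_3,p_4)$ the value of $\varphi$ is now determined (up to the finitely many slot choices). So the three remaining slots give explicit candidate values for $\theta_1,\theta_2,\theta_3$, each up to sign. For each candidate I check:
\begin{enumerate}
\item[(a)] whether some choice of signs gives $\theta_1+\theta_2+\theta_3\in 2\pi\mathbb Z$;
\item[(b)] whether $\cos\theta_1+\cos\theta_2+\cos\theta_3<-1$.
\end{enumerate}
A useful shortcut for (b) is \eqref{eq-necessary}: $\cos(\pi-4\phi_1)+\cos(2\phi_3)+\cos(2\phi_4)>1$. This fails outright for many triples, since $\cos(\pi-4\phi_1)<0$ and its absolute value grows with $p_1$ small. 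I also expect the sum condition (a) to kill most of the rest, because the $\theta$-slots contain at most two angles from one block plus some from the other, giving residues mod $2\pi/15$ inconsistent with $0$.

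The main obstacle is Step 3 together with the slot bookkeeping in Step 2. Many sign and slot assignments must be checked: which block contains $\pi-4\phi_1$, and the normalisation of angles into $(0,\pi/2)$ versus $(0,\pi)$. My first attempt is to organise this by the residue of each angle modulo $\pi/15$, reading off $\varphi$ modulo $\pi/15$ from $2\phi_3$. Then I verify \eqref{eq-theta-sum} mod $2\pi/15$ before doing any numerics, and leave the inequality \eqref{signature (2,1)} for the few remaining cases.
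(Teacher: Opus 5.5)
\textbf{The gap is in your opening reduction.} Once the six cosines are forced to be $\pm S_3$, your treatment follows the paper: Lemma~\ref{lem-theta-S3} gives $\varphi\in(\pi/45)\mathbb Z$, Lemma~\ref{tech lemma 1}(d) and Lemma~\ref{lem-tech-2} put $2\phi_3$ and $2\phi_4$ in different blocks, Lemma~\ref{tech lemma 1}(b),(c) pin down $p_1$, and a sign check on the $\theta_j$ finishes. The paper reaches $(12,15,30)$ and $(18,45,90)$ this way. But the reduction itself does not hold. Theorem~\ref{de2} classifies sums with no proper \emph{rational} subsum. The hypothesis of the lemma only excludes proper subsums equal to $0$; Lemmas~\ref{lem cos x=0}, \ref{lem2} and \ref{lem3} removed only those. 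So \eqref{eq-5.1} need not be a single minimal sum of length six. It can also split into a minimal subsum of value $1/2$ and a complementary one of value $-1/2$. There are exactly two pieces, since with more pieces two of opposite sign would give a vanishing proper subsum. This really occurs here. For $p_1=12$ you have $\cos(\pi-4\phi_1)=\cos(2\pi/3)=-1/2$. Likewise some $\theta_j$ may be $\pm\pi/3$ or $\pm2\pi/3$.

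\textbf{What the paper does with this case.} When one term is $\pm1/2$, the other five cosines must form one of the length-five sums (xiii), (xiv), (xv), (xvii). Only (xiii) has two positive terms $\cos(2\pi/p)$ with $p>10$. This produces the candidate $(p_1,p_3,p_4)=(12,14,21)$ with $\theta_j\in\{\pm20\pi/21,\pm16\pi/21,\pm8\pi/21\}$, which is then killed by \eqref{eq-theta-sum}. Your plan never produces this candidate, because it has already assumed the sum is $\pm S_3$.

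\textbf{How to close the gap.} Go through every split of lengths $(1,5)$, $(2,4)$ and $(3,3)$, using the value-$1/2$ sums (iv)--(xv) and (xvii). Each check is short, because each piece must accommodate $\cos(2\pi/p_3)$ and $\cos(2\pi/p_4)$ with $p_3,p_4\ge 11$. They are not all vacuous, though. For example, the $(3,3)$ split into (vii) and $-$(vi) gives $(p_1,p_3,p_4)=(10,15,30)$ with $\theta_j\in\{\pm8\pi/15,\pm12\pi/15,\pm11\pi/15\}$. This has to be excluded by Proposition~\ref{prop-lp} or by the sign check.

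\textbf{A smaller point.} $|\cos(\pi-4\phi_1)|=\cos(4\pi/p_1)$ increases with $p_1$, not as $p_1$ decreases. Also, \eqref{eq-necessary} actually holds for the surviving triples. So your proposed shortcut removes little, and the sign check on the $\theta_j$ is what does the work.
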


\begin{proof}
The first case is that equation~\eqref{eq-5.1} takes the form of $S_3$, the equation 
from Theorem~\ref{de2}~(iii). We view the angles as either belonging to $S_1$ or $S_2$. 

Since $\cos(\theta_1)+\cos(\theta_2)+\cos(\theta_3)$ is a proper subsum, 
using Lemma~\ref{lem-theta-S3} we see that $\varphi$ must be a multiple of $\pi/45$. 
This means that $p_1$ must be a factor of $180$, while $p_3$ and $p_4$ must be factors of $90$.

It is clear that $\pm (\pi-4\phi_1)$ must be one of the angles from $S_1$ or one from $S_2$.
From Lemma~\ref{tech lemma 1}~(d) we see that one of $\pm2\phi_3$ and $\pm2\phi_4$ must
be one of the angles from $S_1$ and the other from $S_2$. Thus for one choice of 
$j=3,4$ then $\pm(\pi-4\phi_1)$ and $\pm2\phi_j$ are both angles in $S_1$ or both in $S_2$.
Then we can find possible pairs of angles $(p_1,p_j)$ from Lemma~\ref{tech lemma 1}~(b) and
(c). We can also find possible pairs $(p_3,p_4)$ from Lemma~\ref{lem-tech-2}. Using the
fact that $p_1$, $p_3$ and $p_4$ must be factors of $90$ we see the only possibilities 
for $(p_1,p_3,p_4)$ are
$$
(12,15,30),\qquad(18,45,90).
$$
\begin{enumerate}
    \item[(a)] In the case where $(p_1,p_3,p_4)=(12,15,30)$ we have (mod $2\pi$) 
    $$
    \varphi+\frac{2(k+2)\pi}{5}=\pi-\frac{4\pi}{12},\quad 
    \varphi+\frac{2k\pi}{5}=-\frac{2\pi}{15},\quad \phi+\frac{2j\pi}{3}-\pi=-\frac{2\pi}{30}.
    $$
    This translates to $1+3k$ is congruent to $8+5j$ (mod 15). Solving (mod 3) and (mod 5) 
    we obtain $j=1$ and $k=4$ (so $k+2\equiv1$ mod 5). Therefore $\phi=4\pi/15$. 
    The other three angles $\pm\theta_1,\,\pm\theta_2,\,\pm\theta_3$ are (mod $2\pi$)
    $$
    \frac{4\pi}{15}+\frac{4\pi}{3}-\pi=\frac{3\pi}{5},\quad
    \frac{4\pi}{15}+\frac{4\pi}{5}=\frac{16\pi}{15}, \quad
    \frac{4\pi}{15}+\frac{6\pi}{5}=\frac{22\pi}{15}. 
    $$
    There is no choice of signs for which this sums to a multiple of $2\pi$.
    \item[(b)] In the case where $(p_1,p_3,p_4)=(18,45,90)$ we have (mod $2\pi$)
    $$
    \varphi+\frac{2(k+2)\pi}{5}=\pi-\frac{4\pi}{18},\quad 
    \varphi+\frac{2k\pi}{5}=-\frac{2\pi}{90},\quad \phi+\frac{2j\pi}{3}-\pi=\frac{2\pi}{45}.
    $$
    Again, this translates to $1+3k$ is congruent to $8+5j$ (mod $15$) and so 
    $j=1$ and $k=4$. 
    Therefore $\phi=17\pi/45$. The other three angles
    $\pm\theta_1,\,\pm\theta_2,\,\pm\theta_3$ are (mod $2\pi$)
    $$
    \frac{17\pi}{45}+\frac{4\pi}{3}-\pi=\frac{32\pi}{45},\quad
    \frac{17\pi}{45}+\frac{4\pi}{5}=\frac{53\pi}{45}, \quad
    \frac{17\pi}{45}+\frac{6\pi}{5}=\frac{71\pi}{45}. 
    $$
    Again, there is no choice of signs that makes these sum to a multiple of $2\pi$.
\end{enumerate}

Next, noting that $1/2=\cos(\pi/3)$, we see that equation~\eqref{eq-5.1} cannot take the forms (xiv), (xv), or (xvii) in Theorem~\ref{de2}, since none of these forms contains a subsum of the form $\pm\bigl(\cos(2\pi/p_3)+\cos(2\pi/p_4)\bigr)$.

Finally, if equation~\eqref{eq-5.1} takes the form (xiii) in Theorem~\ref{de2}, we obtain
\[
(p_1,p_3,p_4)=(12,14,21)
\qquad\text{with}\qquad
(\theta_1,\theta_2,\theta_3)
=(\pm20\pi/21,\pm16\pi/21,\pm8\pi/21).
\]
It is clear that there is no choice of the $\theta_j$ such that their sum is an integer multiple of $2\pi$. Hence, no solutions exist in this case.
\end{proof}

\medskip

\section{The case $p_1 < p_2$}
\label{section p1<p2}

When $p_1<p_2$, the equation under consideration is
\begin{equation}\label{eq-6.1}
\cos(2\phi_1+2\phi_2)+\cos(2\phi_1-2\phi_2)-\cos(2\phi_3)-\cos(2\phi_4)-\cos \theta_1 -\cos \theta_2-\cos \theta_3=1,    
\end{equation}
where $\phi_j=2\pi/p_j$ for $j=1,2,3,4$.

We exclude the case $p_2=\infty$. In this case, equation~\eqref{eq-6.1} becomes 
\begin{center}
    $2\cos(2\phi_1) - \cos \theta_1 - \cos \theta_2 - \cos \theta_3 = 3$.
\end{center}
According to Theorem~\ref{de2}, the left-hand side of this equation must have six proper subsums, each equal to $1/2$. However, it can be decomposed into at most five subsums. Therefore, this equation admits no solutions.

Then, based on Theorem~3.3 of \cite{sun} and Lemma~\ref{lem p<=9}, we may further assume that $9\le p_1<p_2\le p_3\le p_4<\infty$. 
The main result is as follows:

\begin{prop}\label{prop sec 6}
    Suppose that $9 \leq p_1<p_2 \le p_3\le p_4<\infty$. Then a complex hyperbolic $(p_1, p_2, p_3; p_4)$-triangle group is non-discrete if the element $R_1R_2R_3$ is elliptic.
\end{prop}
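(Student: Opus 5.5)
We argue by contradiction: suppose $R_1R_2R_3$ is elliptic of finite order, so by Lemma~\ref{nec} the angles $\theta_j\in\mathbb{Q}\pi\setminus\mathbb{Z}\pi$ satisfy \eqref{eq-theta-sum}, \eqref{signature (2,1)} and \eqref{eq-6.1}. The first step is to normalise all angles in \eqref{eq-6.1} into $[0,\pi/2]$, replacing $\cos x$ by $-\cos(\pi-x)$ where needed. Terms equal to $0$ or $\pm1$ are treated separately; $\cos\theta_j=\pm1$ is excluded by Lemma~\ref{lem-not-Zpi}, and $\cos\theta_j=0$ gives a shorter equation handled in the same way. This leaves at most seven cosine terms with rational total $1$. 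By Corollary~\ref{de1} and Theorem~\ref{de2}, the left-hand side splits into minimal subsums, each equal to $0$ or $1/2$. Since the total is $1$, exactly two of these subsums equal $1/2$ and the rest vanish.

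The key observation is that the terms $-\cos(2\phi_3)$ and $-\cos(2\phi_4)$ have angles $2\pi/p_3,2\pi/p_4\le 2\pi/10$, so they are already normalised and enter with coefficient $-1$. In each value-$1/2$ sum of Theorem~\ref{de2} (iv)--(xxvi), a term $-\cos(2\pi/p)$ with $p\ge 10$ occurs only for specific denominators: $2\pi/15$, $2\pi/21$, $2\pi/33$, $2\pi/35$, $\pi/15$ (from the signs in (vi),(viii)), $\pi/21$, $\pi/33$, $\pi/35$ and so on. The vanishing sums (i)--(iii) cannot contain two small angles $2\pi/p_3,2\pi/p_4$ together, by the argument of Lemma~\ref{tech lemma 1}(d), adapted using $0<2\pi/p_3\pm2\pi/p_4<2\pi/5$. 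Similarly, $\cos(2\phi_1\pm2\phi_2)$ are positive terms with angles near $0$, and their difference is $4\phi_2$.

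I would first show that each of $-\cos 2\phi_3$ and $-\cos2\phi_4$ lies in a value-$1/2$ subsum or pairs with a $\theta$-term in a vanishing subsum. Then I would enumerate. For each listed $1/2$-sum containing a term $-\cos(2\pi/p)$, read off $p_3$ or $p_4$. Then match the positive terms $\cos(2\phi_1\pm2\phi_2)$ with positive terms of the same sum, which determines $(p_1,p_2)$ from the relations $2\phi_1+2\phi_2=a$ and $2\phi_1-2\phi_2=b$. After that, assign the remaining terms to $\theta$'s and test \eqref{eq-theta-sum} and \eqref{signature (2,1)} sign by sign, as in Lemma~\ref{lem4}.

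The main obstacle is the bookkeeping of the vanishing subsums that mix $\theta$-terms with the fixed terms, including $S_1$, $S_2$ and $S_3$ type sums. These must be controlled using arguments like Lemmas~\ref{lem-subsum-length-2}, \ref{lem-theta-S3} and \ref{lem-tech-2}, which force denominators dividing $90$, together with the ellipticity bounds of \cite{lp} (as in Proposition~\ref{prop-lp}) to discard small $p_1$. I expect the enumeration to leave a single survivor, $(p_1,p_2,p_3,p_4)=(10,30,30,30)$, where $2\phi_1+2\phi_2=4\pi/15$, $2\phi_1-2\phi_2=2\pi/15$, and the $-\cos(\pi/15)$-type terms fit (vi)/(viii). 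For this group I would drop $R_1R_2R_3$ and instead exhibit another elliptic word, for example $R_1R_2R_1R_3$ or $R_1R_3R_2R_3R_2$. I would compute its trace as $1+2\cos\phi$ plus cyclotomic terms in $\pi/30$, exactly as for $\Gamma(8,9,9;9)$ in Lemma~\ref{lem p<=9}, and use Theorem~\ref{de2} to show $\phi\notin\mathbb{Q}\pi$. An elliptic element of infinite order then gives non-discreteness.
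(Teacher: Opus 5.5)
Your reduction follows the paper's: split \eqref{eq-6.1} into two minimal value-$1/2$ subsums plus vanishing ones, locate $-\cos(2\phi_3)$ and $-\cos(2\phi_4)$ among the sums (iv)--(xxvi) of Theorem~\ref{de2} (the paper's Lemmas~\ref{lem-both} and~\ref{lem-only-one}), then match $\cos(2\phi_1\pm2\phi_2)$. You also correctly anticipate the survivor $\Gamma(10,30,30;30)$ with $\theta=\pm(4\pi/5,7\pi/15,11\pi/15)$. You leave that enumeration as an expectation, and two details are off. Your dichotomy for where $-\cos(2\phi_3)$ can sit omits, for example, the cancellation $\cos(2\phi_1-2\phi_2)=\cos(2\phi_3)$ when $1/p_1=1/p_2+1/p_3$. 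Your list of negative terms is also wrong: $\cos(2\pi/15)$, $\cos(2\pi/21)$, $\cos(2\pi/33)$, $\cos(2\pi/35)$ occur only with a plus sign, and the terms $-\cos(2\pi/p)$ with integer $p\ge10$ that actually occur have $p\in\{11,13,30,42,66,70\}$.

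The genuine gap is the last step. In $\Gamma(10,30,30;30)$ the element $R_1R_2R_3$ has finite order, so you need a different element, and the ones you propose do not work. The same computation as for $\Gamma(8,9,9;9)$ gives $\mathrm{tr}(R_1R_2R_1R_3)=\bigl(1+2\cos(\pi/15)\bigr)\bigl(1+2\cos(\pi/15)-2\cos(\pi/5)\bigr)\approx 3.96>3$, so $R_1R_2R_1R_3$ is loxodromic. The word $R_1R_3R_2R_3R_2=R_1(R_3R_2R_3)R_2$ is a product of three reflections. Its trace is not real, so it is not of the form $1+2\cos\phi$, and it has modulus about $4.44>3$, so this element is loxodromic too. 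More generally, $p_2=p_3=p_4$ forces $\cos\alpha=\cos(\pi/10)$ in \eqref{alpha and n}. Then every word $R_aR_bR_aR_c$ with $\{a,b,c\}=\{1,2,3\}$ is either loxodromic or a product of reflections in mirrors at angle $\pi/30$, hence of order $30$. So the $\Gamma(8,9,9;9)$ template cannot succeed for this group.

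The missing idea, which is what the paper uses, is to exploit the eigenvalue data of $R_1R_2R_3$ itself. Since $p_2=p_3=p_4$, $R_1R_2R_3$ preserves the complex line through $v_{23}$ and $R_1(v_{23})$ and rotates it by $11\pi/15-7\pi/15=4\pi/15$. Hence $A=(R_1R_2R_3)^8$ rotates it by $2\pi/15$. Then $AR_2A^{-1}R_2$ is a product of reflections in $A(C_2)$ and $C_2$, with real trace
$\bigl(1-2\cos(2\pi/15)\bigr)^2-1=2-4\cos(2\pi/15)+2\cos(4\pi/15)\approx-0.316\in(-1,3)$, so it is elliptic. Writing its trace as $1+2\cos\phi$ gives $\cos\phi+2\cos(2\pi/15)-\cos(4\pi/15)=1/2$, which Theorem~\ref{de2} rules out for $\phi\in\mathbb{Q}\pi$. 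Without some such construction, your argument does not show that $\Gamma(10,30,30;30)$ is non-discrete, and that is exactly the case your enumeration leaves open.
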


By Corollary~\ref{de1}, no minimal sum of length $7$ and value $1$ admits a solution. Therefore, if equation~\eqref{eq-6.1} admits a solution, its terms must decompose into two minimal subsums, each having value $1/2$, together with any vanishing subsums that may be present; all such subsums are listed in Theorem~\ref{de2}.

The proof of Proposition~\ref{prop sec 6} follows by combining Lemmas~\ref{lem-both} and~\ref{lem-only-one} with the discussion following the latter.
In summary, we obtain the main result by analysing those parts of Theorem~\ref{de2} that contain cosine terms of the form $-\cos(2\pi/p)$. 

In what follows we consider the terms $-\cos(2\pi/p_3)$ and $-\cos(2\pi/p_4)$ from 
equation \eqref{eq-6.1}. 

\begin{lem}\label{lem-both}
Suppose that $10\le p_3\le p_4$ and equation~\eqref{eq-6.1} has a proper, minimal subsum 
with value $1/2$ and with $-\cos(2\pi/p_3)-\cos(2\pi/p_4)$ as a subsum. Then 
equation~\eqref{eq-6.1} has no solutions. 
\end{lem}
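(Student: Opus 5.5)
The plan is to use the explicit list in Theorem~\ref{de2}. The subsum in question is minimal, has value $1/2$, and contains two negatively signed cosines of small angles, $-\cos(2\pi/p_3)-\cos(2\pi/p_4)$. Since $p_3,p_4\ge 10$, both angles $2\pi/p_3,2\pi/p_4$ lie in $(0,\pi/5]$ and are already normalised in $(0,\pi/2)$. So the minimal sum must be one of (iv)--(xxvi), or a length-seven sum as in Corollary~\ref{de1}, containing two terms with coefficient $-1$ whose angles are at most $\pi/5$.

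The first step is to scan the list (iv)--(xxvi) for such pairs.

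\begin{itemize}
\item In (v)--(viii), the negative terms with angle $\le\pi/5$ are only $-\cos(\pi/15)$, and there is at most one of them.
\item In the $7$-, $21$-, $11$-, $33$-, $13$- and $35$-families, negative terms with small angle occur, for example $-\cos(2\pi/7)$ (not $\le\pi/5$), $-\cos(\pi/21)$, $-\cos(5\pi/21)$, $-\cos(2\pi/11)$, $-\cos(\pi/33)$, $-\cos(5\pi/33)$, $-\cos(7\pi/33)$, $-\cos(3\pi/35)$, $-\cos(\pi/35)$, $-\cos(2\pi/13)$.
\end{itemize}

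For each candidate we need two negative terms, both of the form $2\pi/p$ with $p$ an integer at least $10$. This forces the angle to be $2\pi/p$ exactly, so its numerator over $\pi$, in lowest terms, must be $2$ (or $1$ with even $p$). Candidates like $-\cos(\pi/21)$ are impossible since $2\pi/p=\pi/21$ needs $p=42$ (allowed), so the check must be done term by term. I expect that every sum in the list has at most one admissible negative term of the form $2\pi/p$ with $p\ge10$. Example pairs to check are $-\cos(\pi/33)$ with $p=66$ and $-\cos(\pi/35)$ with $p=70$; the latter occurs alone in (xxv), whose other negative term $-\cos(13\pi/35)$ is not admissible. Hence no sum of length at most six works.

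The second step handles the length-seven case via Corollary~\ref{de1}. If the minimal subsum had length seven, it would be all of equation~\eqref{eq-6.1} (seven terms), contradicting properness. So we may restrict to lengths at most six, and the first step finishes the argument. We should also note the normalisation issue: the positive terms $\cos(2\phi_1\pm2\phi_2)$ and the $-\cos\theta_j$ could be rewritten by $\cos x=-\cos(\pi-x)$, changing signs. However, $2\pi/p_3$ and $2\pi/p_4$ already lie in $(0,\pi/2)$, so their signs are fixed as $-1$. A global rescaling by $-1$ is excluded because the value $1/2$ is fixed.

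The main obstacle is the bookkeeping in the first step. We must verify carefully, for each of (ix)--(xxvi), that no two negative terms simultaneously have angles $2\pi/p_3$ and $2\pi/p_4$ with integer $p_3\le p_4$ at least $10$. A borderline case arises when a pair does qualify, for example $-\cos(\pi/21)$ and $-\cos(5\pi/21)$ in (xiii). Here $5\pi/21$ is not of the form $2\pi/p$, so it fails. If some genuine pair survived, I would fall back on the remaining equation: fix $p_3,p_4$, then require the complementary terms $\cos(2\phi_1\pm2\phi_2)-\sum\cos\theta_j$ to sum to $1/2$ compatibly with $\theta_1+\theta_2+\theta_3\in2\pi\mathbb Z$ and \eqref{signature (2,1)}, as in Lemma~\ref{lem4}.
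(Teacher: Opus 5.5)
There is a genuine gap. Your first step rests on the expectation that every sum in Theorem~\ref{de2}~(iv)--(xxvi) contains at most one negatively signed term whose angle has the form $2\pi/p$ with $p\ge 10$. That is false. Sum (xix) contains both $-\cos(2\pi/11)$ and $-\cos(\pi/33)=-\cos(2\pi/66)$. So it is a minimal sum of value $1/2$ containing $-\cos(2\pi/p_3)-\cos(2\pi/p_4)$ with $(p_3,p_4)=(11,66)$. You listed both $-\cos(2\pi/11)$ and $-\cos(\pi/33)$ among your candidates but did not notice that they occur in the same sum. Hence the conclusion ``no sum of length at most six works'' is wrong, and the scan alone does not prove the lemma. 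This is in fact the only surviving case with $p_3\ge 10$; the other pair of admissible-looking terms, $-\cos(2\pi/7)-\cos(\pi/35)$ in (xxv), has $p_3=7$.

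Your fallback is only a sketch, and it points at tools that are not needed here (the condition $\theta_1+\theta_2+\theta_3\in 2\pi\mathbb Z$ and \eqref{signature (2,1)}). The missing step is short.
\begin{itemize}
\item Sum (xix) has length six and \eqref{eq-6.1} has seven terms. So the one leftover term must by itself equal $1/2$, i.e.\ it must be $\cos(\pi/3)$.
\item The ordering $9\le p_1<p_2\le p_3=11$ forces $(p_1,p_2)\in\{(9,10),(9,11),(10,11)\}$. Hence $2\pi/p_1-2\pi/p_2$ is one of $\pi/45$, $4\pi/99$, $\pi/55$.
\item The term $\cos(2\pi/p_1-2\pi/p_2)$ has coefficient $+1$ and an angle already in $(0,\pi/2)$. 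That angle is not among the angles of (xix), so the term cannot lie in the (xix) subsum.
\item The angle is also not $\pi/3$, so the term cannot be the leftover term either.
\end{itemize}
Therefore there is no solution. With this case added, your argument becomes the paper's.
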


\begin{proof}
We do this by inspection on Theorem~\ref{de2} (iv) to (xxvi). We see that the only equations 
with a subsum of this form are (xix) with $(p_2,p_4)=(11,66)$, and (xxv) with 
$(p_3,p_4)=(7,42)$ and (xxv) with $(p_3,p_4)=(7,70)$. Since we assumed $p_3\ge 10$ then 
$p_3=11$ and $p_4=66$ and the subsum is Theorem~\ref{de2} (xix). 
Furthermore, since (xix) has length six, the remaining term must be $\cos(\pi/3)=1/2$.

Since $9\le p_1<p_2\le p_3=11$ we must have $(p_1,p_2)=(9,10)$, $(9,11)$ or $(10,11)$.
Therefore $2\pi/p_1-2\pi/p_2$ is one of $\pi/45$, $4\pi/99$ or $\pi/55$. Since none of these angles occurs in (xix) nor is equal to $\pi/3$ we obtain no solutions.
\end{proof}

\medskip

\begin{lem}\label{lem-only-one}
Suppose that $10\le p_3\le p_4$ and equation~\eqref{eq-6.1} has a proper, minimal subsum with value
$1/2$ and which contains only one of $-\cos(2\pi/p_3)$ or $-\cos(2\pi/p_4)$. Then this subsum 
has length three or four and contains $-\cos(2\pi/30)$.    
\end{lem}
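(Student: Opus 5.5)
We argue by inspection of the sums with value $1/2$ in Theorem~\ref{de2}, namely (iv)--(xxvi). A minimal subsum of equation~\eqref{eq-6.1} with value $1/2$ is, after normalising all angles into $(0,\pi/2)$ (or $(0,\pi)$ where relevant), proportional to one of these sums. Since the coefficients in \eqref{eq-6.1} are all $\pm1$, the proportionality constant is $1$: the sum is exactly one of (iv)--(xxvi). We then locate in each listed sum the terms carrying a minus sign whose angle has the form $2\pi/p$ with $p\ge 10$ an integer. These are the only candidates for $-\cos(2\pi/p_3)$ or $-\cos(2\pi/p_4)$. The term cannot come from re-signing a positive term with angle $\pi-2\pi/p$, because that angle is larger than $\pi/2$ and so is not normalised.

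The first step is a direct scan. In (iv)--(xxvi) the negative terms with numerator $2$ and denominator at least $10$ are few:
\begin{itemize}
\item $-\cos(2\pi/11)$, in (xvii), (xix), (xx), (xxii);
\item $-\cos(2\pi/13)$, in (xxiii);
\item $-\cos(2\pi/15)$, in (viii) with the sign flipped, so it is absent;
\item $-\cos(2\pi/30)$, which arises when $-\cos(\pi/15)$ is read as $-\cos(2\pi/30)$, in (vi), (viii), (xiii)-type sums.
\end{itemize}
I would tabulate every negative term $-\cos(a\pi/b)$ with $a/b=2/p$, taking care that $\pi/15=2\pi/30$, $\pi/21=2\pi/42$, $\pi/33=2\pi/66$, $\pi/35=2\pi/70$, $2\pi/21$, $\pi/7$ is $2\pi/14$, $\pi/5$ is $2\pi/10$, $\pi/11=2\pi/22$ and $\pi/13=2\pi/26$. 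In every case I record the length of the sum.

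The second step eliminates the candidates that are not $p=30$, using the remaining terms of \eqref{eq-6.1}. The other terms of the subsum must come from $\cos(2\phi_1\pm2\phi_2)$, from $-\cos\theta_j$, and possibly from the other $-\cos(2\pi/p_k)$. The sum contains only one of $-\cos(2\pi/p_3)$ and $-\cos(2\pi/p_4)$, so every other negative term must be some $-\cos\theta_j$, and there are at most three of these. Every positive term must be $\cos(2\phi_1\pm2\phi_2)$ or a re-signed $-\cos\theta_j$.

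The length can then be bounded. The sum draws on at most two positive terms from $T_1$ and at most three from the $\theta_j$, giving length at most six. The two terms $\cos(2\phi_1\pm2\phi_2)$ have angles differing by $4\pi/p_2$ and averaging $2\pi/p_1$, so together they are strongly constrained. Combining this with the relation $\theta_1+\theta_2+\theta_3\equiv0$ and with Lemma~\ref{lem-both} should rule out the long sums (xvii)--(xxvi) and the $p=11,13,22,26,42,66,70$ candidates. For example, with $p_3=11$ the condition $9\le p_1<p_2\le 11$ forces specific angles $2\pi/p_1\pm2\pi/p_2$ that do not appear in the sum, exactly as in the proof of Lemma~\ref{lem-both}. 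The candidates $p=10$ and $p=14$ fall below the hypothesis $10\le p_3$, or would force $p_1<9$, because $p_3\ge p_2>p_1\ge 9$.

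What survives are the $1/15$-type sums (vi), (vii) and (viii), in which the $-\cos(\pi/15)=-\cos(2\pi/30)$ term sits. These have length three or four, which gives the statement.

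The main obstacle is the second step for the length-five and length-six sums, (xiii)--(xxvi). There I must show that the sign pattern and the angle set are incompatible with having exactly the terms available in \eqref{eq-6.1}. I would first try a counting argument on negative terms: a term with denominator $33$, $35$ or $21$ cannot equal $2\phi_1\pm2\phi_2$ with $p_1<p_2\le p_3$ small. Failing that, I would fall back on a case check over the finitely many resulting $(p_1,p_2)$.
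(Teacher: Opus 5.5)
Your proposal has a genuine gap: the step that does the real work is missing. Your length bound gives at most six: one term $-\cos(2\pi/p_k)$, at most two terms $\cos(2\phi_1\pm2\phi_2)$ and at most three terms $-\cos\theta_j$. That leaves all of (xiii)--(xxvi) in play. Your second step, which is supposed to eliminate these sums and every candidate $p\neq 30$, is only asserted (``should rule out'', ``failing that, I would fall back on a case check''). The idea you are missing is the seven-term count from the paper. The other term $-\cos(2\pi/p_{k'})$ is not in your subsum, so it lies in a different minimal subsum of \eqref{eq-6.1}. If that subsum has value $1/2$, it has length at least three, because the only value-$1/2$ sums of length one or two, (iv) and (v), contain no term $-\cos(2\pi/p)$ with $p\ge 10$. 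If it is a vanishing subsum, it has length at least two, and the second value-$1/2$ subsum still needs at least one more term. Since \eqref{eq-6.1} has only seven terms, your subsum has length at most four. This removes (xiii)--(xxvi), and with them $p=11,13,66,70$, with no analysis of $(p_1,p_2)$ at all. Your candidate list also has slips:
\begin{itemize}
\item $\cos(\pi/11)$ and $\cos(\pi/13)$ only ever occur with a plus sign, so $p=22,26$ never arise;
\item (xiii) contains no $\pi/15$;
\item (vii) contains no term $-\cos(2\pi/p)$ with $p\ge10$, so it cannot be a survivor;
\item $p=10,14$ never occur, and your reason for discarding them is wrong, since $p_3=10$ is permitted.
\end{itemize}

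Once the length is at most four, the inspection still needs care. Besides (vi) and (viii), where $-\cos(\pi/15)=-\cos(2\pi/30)$, sum (xi) has length four and contains $-\cos(\pi/21)=-\cos(2\pi/42)$. The paper's inspection passes over (xi), so its proof is also incomplete here. You were right to keep $p=42$ on your list, but you must exclude it, and this is where your idea of matching the remaining terms is genuinely needed. Since $42\in\{p_3,p_4\}$, we get $p_2\le 42$. The positive terms $\cos(\pi/7)$, $\cos(3\pi/7)$, $\cos(8\pi/21)$ must come from $\cos(2\phi_1\pm2\phi_2)$ and the $-\cos\theta_j$.
\begin{itemize}
\item If all three are $-\cos\theta_j$, then $\theta_j\equiv\pm18\pi/21,\pm12\pi/21,\pm13\pi/21$, which never sum to a multiple of $2\pi$.
\item If both $\cos(2\phi_1\pm 2\phi_2)$ occur, with angles $a\pi/21>b\pi/21$ from $\{3\pi/21,8\pi/21,9\pi/21\}$, then $p_1=84/(a+b)$ is never an integer $\ge 9$.
\item If exactly one occurs, a short check shows its angle is $\pi/7$, so $(p_1,p_2)=(21,42)$ or $(10,35)$. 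The other two angles give $\theta\equiv\pm12\pi/21$ and $\pm13\pi/21$, so the third satisfies $\cos\theta_{j'}\in\{\cos(\pi/21),-\cos(4\pi/21)\}$. Requiring the three leftover terms of \eqref{eq-6.1} to sum to $1/2$ then forces $\cos(2\pi/p_{k'})$ to be negative or larger than $1$.
\end{itemize}
With the counting step and this check added, your route proves the lemma; as written, it does not.
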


\begin{proof}
We use inspection on Theorem~\ref{de2} (iv) to (xxvi). First, the only sums of length one or
two with value $1/2$ do not contain any terms of the form $-\cos(2\pi/p)$ with $p\ge 10$.
Therefore the sum in question must have length at least three. Since our sum contains
only one of the terms $-\cos(2\pi/p_3)$ or $-\cos(2\pi/p_4)$, the other one must be in a
subsum of equation~\eqref{eq-6.1} with value $1/2$, which needs to have length at least three.
Since equation~\eqref{eq-6.1} contains seven terms, our original subsum can have length
at most four.

Now, by inspection, the only sums in Theorem~\ref{de2} with value $1/2$, length three or four and which contain a term of the form $-\cos(2\pi/p)$ with $p\ge 10$ are (vi) and (viii). 
In both cases $p=30$. 
\end{proof}

\medskip

It follows immediately from Lemma~\ref{lem-only-one} that $p_3=p_4=30$. There are two cases to consider. First, suppose that equation~\eqref{eq-6.1} contains two copies of the minimal subsum in part~(vi) of Theorem~\ref{de2}. This necessarily implies that $\cos\theta_j=0$ for some $j$. 
Second, equation~\eqref{eq-6.1} is the sum of Theorem~\ref{de2} (vi) and (viii).
That is:
\begin{equation*}\label{6.1-30-30}
    1=\cos\left(\frac{\pi}{5}\right)+\cos\left(\frac{2\pi}{15}\right)
    +2\cos\left(\frac{4\pi}{15}\right) -\cos\left(\frac{7\pi}{15}\right)
    -2\cos\left(\frac{\pi}{15}\right).
\end{equation*}
In either case, we need to identify which of these terms correspond to 
$\cos(2\pi/p_1-2\pi/p_2)$ and $\cos(2\pi/p_1+2\pi/p_2)$. 
We need to find possible values of $p_1$ and $p_2$ with $9\le p_1<p_2\le 30$ so that 
$2\pi/p_1\pm 2\pi/p_2$ is one of 
$$
\pi/5,\quad 2\pi/15,\quad 4\pi/15,\quad 8\pi/15.
$$
Since $9\le p_1<p_2$ we immediately have
$$
0<2\pi/p_1-2\pi/p_2<2\pi/p_1+2\pi/p_2\le 19\pi/45.
$$
This immediately rules out the term $8\pi/15$, that is $-\cos(7\pi/15)$.
Therefore, we must solve
$$
\frac{2\pi}{p_1}+\frac{2\pi}{p_2}=\frac{a\pi}{15},\quad 
\frac{2\pi}{p_1}-\frac{2\pi}{p_2}=\frac{b\pi}{15}
$$
for integers $2\le b<a\le 4$. Solving these equations gives
$$
p_1=60/(a+b),\quad p_2=60/(a-b).
$$
The only solution is $(p_1,p_2)=(10,30)$ where $a=4$ and $b=2$. In turn, this means that
the other three terms must be $-\cos(\theta_j)$ for $j=1,2,3$. Therefore, we obtain the 
solution
$$
(p_1,p_2,p_3,p_4)=(10,30,30,30),\quad 
(\theta_1,\theta_2,\theta_3)=\pm(4\pi/5,7\pi/15,11\pi/15).
$$

Finally, we show that $\Gamma(10,30,30;30)$ is not discrete. 
Let $v_{23}$ be the fixed point of $R_2R_3$. Then it is not hard to show that,
since $p_2=p_3=p_4$, the map $R_1R_2R_3$
sends the complex line through $v_{23}$ and $R_1(v_{23})$ to itself, and acts there as a
elliptic map with rotation angle $11\pi/15-7\pi/15=4\pi/15$. Therefore $(R_1R_2R_3)^8$ acts
on this complex line with rotation angle $2\pi/15$. A direct computation gives
\begin{eqnarray*}
\lefteqn{
\mathrm{tr}\bigl((R_1R_2R_3)^8R_2(R_3R_2R_1)^8R_2\bigr) }\\
& = & \left(1-2\cos\left(\frac{2\pi}{15}\right)\right)^2-1 \\
& = & 
2-4\cos\left(\frac{2\pi}{15}\right)
+2\cos\left(\frac{4\pi}{15}\right)\\
& = & 
-0.31592\ldots\in(-1,3).
\end{eqnarray*}
Hence, this element is an elliptic isometry with real trace. Its trace can also be written as
\[
2-4\cos\left(\frac{2\pi}{15}\right)
+2\cos\left(\frac{4\pi}{15}\right)
=
1+2\cos\phi,
\]
that is,
\[
\cos\phi
+2\cos\left(\frac{2\pi}{15}\right)
-\cos\left(\frac{4\pi}{15}\right)
=
\frac{1}{2}.
\]
By Theorem~\ref{de2}, $\phi$ is not a rational multiple of $\pi$. Therefore, $\Gamma(10,30,30;30)$ contains an elliptic 
element of infinite order and is consequently non-discrete.

\section{Tool use}

The only software used was Maple, which was used for some calculations. The only AI tools used were DeepSeek and ChatGPT, 
which were only used for typesetting and grammar.

\end{document}